\newtheorem{thm}{Theorem}[section]
\newtheorem{lem}[thm]{Lemma}
\newtheorem{cor}[thm]{Corollary}
\newtheorem{conj}[thm]{Conjecture}
\newtheorem{prop}[thm]{Proposition}
\newtheorem{q}[thm]{Question}
\newtheorem{ex}[thm]{Example}
\theoremstyle{remark}
\newtheorem{rem}[thm]{Remark}
\theoremstyle{definition}
\numberwithin{equation}{section}
\DeclareMathOperator{\rank}{rank}
\DeclareMathOperator{\Ext}{Ext}
\DeclareMathOperator{\Pic}{Pic}
\DeclareMathOperator{\CH}{CH}
\DeclareMathOperator{\rat}{rat}
\begin{document}

\vfuzz0.5pc
\hfuzz0.5pc % Don't bother to report 
          % overfull boxes if overage is < 6pt

\newcommand{\claimref}[1]{Claim \ref{#1}}
\newcommand{\thmref}[1]{Theorem \ref{#1}}
\newcommand{\propref}[1]{Proposition \ref{#1}}
\newcommand{\lemref}[1]{Lemma \ref{#1}}
\newcommand{\coref}[1]{Corollary \ref{#1}}
\newcommand{\remref}[1]{Remark \ref{#1}}
\newcommand{\conjref}[1]{Conjecture \ref{#1}}
\newcommand{\questionref}[1]{Question \ref{#1}}
\newcommand{\defnref}[1]{Definition \ref{#1}}
\newcommand{\secref}[1]{Sec. \ref{#1}}
\newcommand{\ssecref}[1]{\ref{#1}}
\newcommand{\sssecref}[1]{\ref{#1}}

\newcommand{\red}{{\mathrm{red}}}
\newcommand{\tors}{{\mathrm{tors}}}
\newcommand{\eq}{\Leftrightarrow}

\newcommand{\mapright}[1]{\smash{\mathop{\longrightarrow}\limits^{#1}}}
\newcommand{\mapleft}[1]{\smash{\mathop{\longleftarrow}\limits^{#1}}}
\newcommand{\mapdown}[1]{\Big\downarrow\rlap{$\vcenter{\hbox{$\scriptstyle#1$}}$}}
\newcommand{\smapdown}[1]{\downarrow\rlap{$\vcenter{\hbox{$\scriptstyle#1$}}$}}

\newcommand{\A}{{\mathbb A}}
\newcommand{\I}{{\mathcal I}}
\newcommand{\J}{{\mathcal J}}
\newcommand{\CO}{{\mathcal O}}
\newcommand{\CC}{{\mathcal C}}
\newcommand{\C} {{\mathbb C}}
\newcommand{\BC}{{\mathbb C}}
\newcommand{\BQ}{{\mathbb Q}}
\newcommand{\m}{{\mathcal M}}
\newcommand{\h}{{\mathcal H}}
\newcommand{\ZZ}{{\mathcal Z}}
\newcommand{\Z} {{\mathbb Z}}
\newcommand{\BZ}{{\mathbb Z}}
\newcommand{\W}{{\mathcal W}}
\newcommand{\Y}{{\mathcal Y}}
\newcommand{\T}{{\mathcal T}}
\newcommand{\BP}{{\mathbb P}}
\newcommand{\CP}{{\mathcal P}}
\newcommand{\G}{{\mathbb G}}
\newcommand{\BR}{{\mathbb R}}
\newcommand{\D}{{\mathcal D}}
\newcommand {\LL} {{\mathcal L}}
\newcommand{\f}{{\mathcal F}}
\newcommand{\E}{{\mathcal E}}
\newcommand{\BN}{{\mathbb N}}
\newcommand{\N}{{\mathcal N}}
\newcommand{\K}{{\mathcal K}}
\newcommand{\R} {{\mathbb R}}
\newcommand {\PP} {{\mathbb P}}
\newcommand{\BF}{{\mathbb F}}
\newcommand{\closure}[1]{\overline{#1}}
\newcommand{\EQ}{\Leftrightarrow}
\newcommand{\imply}{\Rightarrow}
\newcommand{\isom}{\cong}
\newcommand{\embed}{\hookrightarrow}
\newcommand{\tensor}{\mathop{\otimes}}
\newcommand{\wt}[1]{{\widetilde{#1}}}
\newcommand{\ol}{\overline}
\newcommand{\ul}{\underline}
\newcommand{\QQ}{{\mathcal Q}}

\newcommand{\bs}{{\backslash}}
\newcommand{\CS}{{\mathcal S}}
\newcommand{\Q} {{\mathbb Q}}
\newcommand {\F} {{\mathcal F}}

\author{Xi Chen}
\address{632 Central Academic Building\\ University of Alberta\\ Edmonton, Alberta T6G 2G1, CANADA}
\email{xichen@math.ualberta.ca}

\author{James D. Lewis}
\address{632 Central Academic Building\\ University of Alberta\\ Edmonton, Alberta T6G 2G1, CANADA}
\email{lewisjd@ualberta.ca}

\keywords{$K3$ surface, rational curve, elliptic curve, elliptic surface}

\subjclass{Primary 14N10, 14J28}

\renewcommand{\abstractname}{Abstract}
\begin{abstract}
Using the dynamics of self rational maps of elliptic $K3$ surfaces
together with deformation theory,
we prove that the union of rational curves is dense on a very general 
$K3$ surface and that the union of elliptic curves is dense in the 1st jet space of a 
very general $K3$ surface, both in the strong topology. 
\end{abstract}

\thanks{Both authors partially supported by a grant from the Natural Sciences and Engineering Research Council of Canada.}

\date{\today}

\title{Density of rational curves on $K3$ surfaces}

\maketitle

\section{Introduction}

\subsection{Density of rational curves}

The main purpose of this note is to prove that 
the union of all rational curves on a ``very general'' projective $K3$ surface $X$ is dense in the usual topology.
Here ``very general'' takes some explanation. It is weaker than the usual sense of being in
the complement of  countably many closed proper subvarieties.

Let $\K_g$ be the moduli space of $K3$ surfaces of genus $g\ge 2$ and $\CS_g$ be the universal family over $\K_g$. That is,
\begin{equation}\label{E030}
\begin{split}
\K_g = \big\{ (X,L): &\ X \text{ is a $K3$ surface, } L\in \Pic(X) \text{ is ample primitive}\\
 &\quad \text{ and } L^2 = 2g - 2 \big\}
\end{split}
\end{equation}
and $\CS_g = \{ (X, L, p): (X,L)\in \K_g, p\in X \}$.

Let $\CC_{g,n}\subset \CS_g$ be a closed subscheme of $\CS_g$
whose fiber over a general point $(X, L)\in \K_g$ is the union of all irreducible
rational curves in the linear series $|nL| = \BP H^0(X, nL)$. Our main theorem is

\begin{thm}\label{THM001}
For all $g\ge 2$, the set
\begin{equation}\label{E005}
\bigcup_{n=1}^\infty \CC_{g,n}
\end{equation}
is dense in $\CS_g$.
\end{thm}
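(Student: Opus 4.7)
The plan is to combine specialization to an elliptic $K3$ surface, dynamics of rational self-maps on it, and deformation theory across the moduli $\K_g$. I would first specialize to a Jacobian elliptic $K3$ surface $X_0 \in \K_g$: a surface admitting a fibration $\pi\colon X_0 \to \BP^1$ with a section $\sigma_0$ and with Mordell-Weil group of positive rank. Such $X_0$ form a complex codimension-one analytic subset of $\K_g$, dense in $\K_g$ in the strong topology. The fibration provides a large supply of rational self-maps (the fiberwise multiplication maps $[n]\colon X_0 \dashrightarrow X_0$ and translations by sections) together with an initial stock of irreducible rational curves (sections, and components of the $24$ singular fibers of $\pi$).

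Starting from an irreducible rational curve $R_0\subset X_0$, iterating the above self-maps produces a countable collection $\{R_\alpha\}$ of irreducible rational curves on $X_0$, each obtained as the dominant image of $\BP^1$ under a rational map and hence itself rational. Density of $\bigcup R_\alpha$ in $X_0$ in the strong topology reduces to density of $\bigcup R_\alpha\cap F_t$ on a generic smooth fiber $F_t\cong \BC/\Lambda_t$. The latter intersection contains the orbit of $R_0\cap F_t$ under the subgroup of $F_t$ generated by the restriction of the Mordell-Weil group and the $n$-torsion points supplied by the multiplication maps; since this subgroup contains non-torsion elements, its closure in the complex elliptic curve $F_t$ is all of $F_t$.

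For the global conclusion I would deform. Restricting attention to those $R_\alpha$ whose class is a multiple $nL_0$ of the polarization, each such $R_\alpha$ deforms in an unobstructed family over a complex codimension-one locus $D_n \subset \K_g$ through $[X_0]$, by the standard analysis of the Severi variety of nodal rational curves in $|nL|$ (arithmetic genus $n^2(g-1)+1$, imposing exactly that many conditions on the pair so the residual codimension in $\K_g$ is $1$). The trace of this deformation in the universal family is contained in $\CC_{g,n}$, so combined with fiberwise density on each $X_0$ and strong density of Jacobian elliptic $K3$'s in $\K_g$, the union $\bigcup_n \CC_{g,n}$ accumulates at every point of $\CS_g$. \emph{Main obstacle.} The crux is controlling this deformation: it must be shown transverse to the Noether-Lefschetz locus of elliptic $K3$ surfaces, for otherwise the rational curves $R_\alpha$ would only persist along that locus and their union would lie in a codimension-one subset of $\CS_g$ rather than be dense. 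Establishing transversality --- so that the deformation reaches $K3$'s of Picard rank $1$ --- is the heart of the argument and requires careful analysis of the obstruction map for deforming the pair $(X_0, R_\alpha)$ out of the elliptic locus.
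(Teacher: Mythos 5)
Your overall strategy (specialize to a dense family of elliptic $K3$'s, run fiberwise dynamics to get a dense countable union of rational curves there, then deform back into $\K_g$) is the same as the paper's, but two steps as written would fail. First, the claim that a subgroup of a smooth fiber $F_t\cong\BC/\Lambda_t$ containing a non-torsion element has dense closure is false: $\BC/\Lambda_t$ is a real $2$-torus, and a cyclic group $\BZ p$ with $p=(x_1,x_2)$ is dense iff $1,x_1,x_2$ are linearly independent over $\BQ$ (the paper's Lemma \ref{LEM002}); a non-torsion point can perfectly well generate a $1$-dimensional subtorus. Showing that the relevant points (in the paper, $L-mp\in J(X_q)$ arising from $\phi_{nL}$) actually satisfy this independence is a substantial part of the argument --- the paper proves it by degenerating $X$ to a union of two rational surfaces and showing a map $\gamma$ to $\overline{\m}_{1,2}$ is dominant (Lemma \ref{LEM003}), or alternatively via normal functions and the Griffiths infinitesimal invariant in Section \ref{SEC003}. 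Your proposal has no mechanism for this genericity.

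Second, and more fatally, restricting to those dynamically produced curves $R_\alpha$ whose class is a multiple $nL_0$ destroys density. Each linear series $|nL|$ contains only finitely many rational curves, and the orbit curves $\phi_{kL}(C)$ lie in $|aL+b_kF|$ with $b_k\to\infty$; essentially none of the curves responsible for density lie in a class $nL_0$, so the subfamily you propose to deform is not dense on $X_0$. The paper's key additional idea (Proposition \ref{PROP002}) is to complete each such curve $C$ by an auxiliary irreducible rational curve $B\in|nL-C|$ (produced by a separate limiting-curve construction, Lemma \ref{LEM004}) and then deform the \emph{reducible} curve $B\cup C$ to an irreducible rational curve on a Picard-rank-one $K3$. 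The transversality to the Noether--Lefschetz locus that you flag as the ``main obstacle'' is then handled not by an obstruction computation for the pair $(X_0,R_\alpha)$ but by a virtual-dimension count for stable maps of class $nL$ in a two-parameter non-algebraic (``twisted'') family of $K3$'s: the moduli space has virtual dimension $1$, automatically lies over the one-dimensional locus $W$ where $L$ stays algebraic, and hence is flat and surjective onto $W$. Without the auxiliary curve $B$ and this stable-map argument, your deformation step has nothing dense to deform and no proof that it escapes the elliptic locus.
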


Using an elementary topological argument, we can easily conclude the following (actually equivalent) statement.

\begin{cor}\label{COR001}
For all $g \ge 2$, the set
\begin{equation}\label{E006}
\left\{(X,L)\in \K_g: \bigcup_{n=1}^\infty \CC_{X, nL} \text{ is not dense in } X\right\}
\end{equation}
is {of the first Baire category}, i.e., a countable union of nowhere dense subsets in $\K_g$ under the usual topology,
where $\CC_{X,nL}$ is the fiber of $\CC_{g,n}$ over $(X, L)$.
Hence the set of $K3$ surfaces of genus $g$ whose rational curves are dense is {of the second Baire category}.
\end{cor}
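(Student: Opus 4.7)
The plan is to realize the bad set
$$B := \left\{(X,L)\in\K_g : \bigcup_{n=1}^\infty\CC_{X,nL}\ \text{is not dense in}\ X\right\}$$
as a countable union of nowhere dense subsets of $\K_g$; the second assertion then follows since $\K_g$ is a Baire space. Write $Y:=\bigcup_n\CC_{g,n}\subset\CS_g$ and let $\pi:\CS_g\to\K_g$ denote the projection, which is a proper holomorphic submersion off a proper analytic locus (that we may discard), hence an open map. Since $\CS_g$ is second countable, fix a countable basis $\{W_i\}_{i\ge 1}$ of open subsets.

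Because the restrictions $\{W_i\cap X : W_i\cap X\ne\emptyset\}$ form a basis of each fiber $X$, one checks that $B=\bigcup_i B_i$, where $B_i:=\pi(W_i)\setminus\pi(W_i\cap Y)$. Combining Theorem~\ref{THM001} (density of $Y$ in $\CS_g$) with openness of $\pi$ shows $\pi(W_i\cap Y)$ is dense in the open set $\pi(W_i)$: for any open $V\subset\pi(W_i)$, the set $\pi^{-1}(V)\cap W_i\subset\CS_g$ is nonempty open, meets $Y$ by density, and projecting yields $V\cap\pi(W_i\cap Y)\ne\emptyset$. Hence each $B_i$ has empty interior in $\K_g$.

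The main obstacle is upgrading ``empty interior'' to ``nowhere dense'', since $B_i$ is not a priori closed. I would handle this by exploiting the algebraic structure of $Y$: each $\CC_{g,n}$ is a closed algebraic subvariety of $\CS_g$ of dimension at most $20<21=\dim\CS_g$, and on the Zariski-open subset $\CC_{g,n}^\circ\subset\CC_{g,n}$ of smooth points at which $\pi|_{\CC_{g,n}}$ has maximal rank (nonempty on each dominant irreducible component, and for families of rational curves on $K3$ surfaces these dominant components cover $\CC_{g,n}$ by standard deformation theory), the restriction $\pi|_{\CC_{g,n}^\circ}$ is an open holomorphic submersion. An irreducibility argument---any nonempty analytic open subset of an irreducible algebraic variety is Zariski dense, and so must meet every Zariski open dense subset of it---shows that $Y^\circ:=\bigcup_n\CC_{g,n}^\circ$ remains dense in $\CS_g$. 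Then $\pi(W_i\cap Y^\circ)\subset\pi(W_i\cap Y)$ is open in $\K_g$, and dense in $\pi(W_i)$ by the same argument as above applied to $Y^\circ$. Setting
$$G_i := \pi(W_i\cap Y^\circ)\ \cup\ \bigl(\K_g\setminus\overline{\pi(W_i)}\bigr),$$
$G_i$ is open and dense in $\K_g$: its complement $\overline{\pi(W_i)}\setminus\pi(W_i\cap Y^\circ)$ is closed and has empty interior, since $\pi(W_i\cap Y^\circ)$ is dense in $\overline{\pi(W_i)}$. As $B_i\subset\K_g\setminus G_i$, each $B_i$ lies in a closed nowhere dense subset, so $B\subset\bigcup_i(\K_g\setminus G_i)$ is of first Baire category. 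The Baire property of $\K_g$ then yields the complementary statement.
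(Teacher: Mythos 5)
Your argument is correct, and it is essentially the ``elementary topological argument'' that the paper alludes to but never writes out: the paper offers no proof of this corollary beyond asserting that it follows from Theorem \ref{THM001}. Your extra step of upgrading ``empty interior'' to ``nowhere dense'' by passing to the locus where $\pi|_{\CC_{g,n}}$ is a submersion (so that $\pi(W_i\cap Y^\circ)$ is open and dense in $\pi(W_i)$, making each $B_i$ lie in a closed nowhere dense set) addresses a real gap in the naive version of the argument, and your treatment of it is sound.
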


This partially answers a question raised in \cite{C-L} (Conjecture 1.2),
although we expect that the union of rational curves
are dense on every projective $K3$ surface, not only the general ones.
However, the method here does not lend itself to handle every projective $K3$ surface.
On the other hand, it is unknown whether the union of rational curves is dense in
the Zariski topology on every projective $K3$ surface.

\begin{conj}\label{CONJ001}
The union of rational curves is dense in the Zariski topology on every projective $K3$ surface $X$.
That is, there are infinitely many rational curves on $X$.
\end{conj}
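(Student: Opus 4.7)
The plan is to combine \thmref{THM001} with a specialization argument inside the moduli space $\K_g$, where $g = (L^2+2)/2$ so that $(X, L) \in \K_g$. By \coref{COR001}, for a very general $(Y, M) \in \K_g$ the union $\bigcup_n \CC_{Y, nM}$ is Euclidean-dense in $Y$. Since a $1$-dimensional subset of the surface $Y$ is nowhere dense, this forces infinitely many rational curves on $Y$, and the indices $n$ of contributing curves must be unbounded (otherwise the union would lie in a proper closed subscheme of $Y$).

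To transfer these curves to $X$, I would pick an irreducible curve $T \subset \K_g$ through $(X, L)$ whose general member is very general in the sense above; such a $T$ exists because the very general locus is residual in $\K_g$ and meets a generic curve in a residual, hence dense, subset. For each $n \ge 1$, let $\overline{\mathcal{V}}_n \to \K_g$ denote the closure, in the relative Hilbert scheme of $|n\mathcal{L}|$, of the locus of curves of geometric genus zero. This map is proper, and since rational curves in $|nL|$ on a K3 form a Severi variety of expected dimension zero, $\overline{\mathcal{V}}_n$ is generically finite onto its image. For the infinitely many $n$ for which $\overline{\mathcal{V}}_n \to T$ is nonempty over a dense subset of $T$, properness forces surjectivity, and specializing to the point $(X, L) \in T$ yields a curve $C_n \in |nL|$ on $X$ of geometric genus zero. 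Every irreducible component of $C_n$ is then a rational curve on $X$.

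The hard part is to conclude that the $C_n$ produce \emph{infinitely many distinct} rational curves on $X$. If $X$ contained only finitely many rational curves $R_1, \ldots, R_k$, each $C_n$ could still decompose as $\sum a_{i,n} R_i$ with $\sum a_{i,n}[R_i] = n[L]$, and such nonnegative integer decompositions exist for every $n \gg 0$, yielding no contradiction. Ruling this out appears to require some genuinely new input: for instance, producing \emph{irreducible} rational deformations in $|nL|$ for infinitely many $n$ over a Zariski-open neighborhood of $(X, L)$, or adapting the elliptic-fibration dynamics underlying \thmref{THM001} to a setting where the target K3 does not itself admit an elliptic fibration. In the absence of such an ingredient, \conjref{CONJ001} seems to lie beyond the reach of the methods developed here, which rely essentially on the availability of a nontrivial isotropic class in $\Pic(X)$.
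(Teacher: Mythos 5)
The statement you were asked about is Conjecture~\ref{CONJ001}, which the paper explicitly leaves open: the authors say in the surrounding text that their method ``does not lend itself to handle every projective $K3$ surface or even every projective $K3$ surface with Picard rank $1$,'' and Remark~\ref{REM001} records that only the $g=2$ case was known at the time, via characteristic~$p$ reduction in \cite{BHT}. So there is no proof in the paper to compare against, and your proposal --- which ends by conceding that the conjecture is out of reach of these methods --- reaches the correct conclusion. Your self-diagnosis of the obstruction is exactly right and worth affirming: the specialization strategy produces, for infinitely many $n$, an effective genus-zero member $C_n\in|nL|$ on $X$, but if $X$ carried only finitely many rational curves $R_1,\dots,R_k$ then every $C_n$ could a priori decompose as $\sum a_{i,n}R_i$, and since $[L]$ generates $\Pic(X)$ such nonnegative decompositions are not excluded by any intersection-theoretic count. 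This is precisely the classical gap that kept the conjecture open, and it is what the characteristic~$p$ argument of \cite{BHT} is designed to circumvent (by reducing mod $p$, where the Picard rank jumps and new rational curve classes appear that cannot be supported on the lifts of $R_1,\dots,R_k$). The paper's own machinery genuinely requires the isotropic class $F$ and the resulting elliptic fibration to run the dynamical argument of Proposition~\ref{PROP001}, which is unavailable when $\Pic(X)=\BZ$, so your closing assessment is accurate.

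One smaller point in your sketch deserves a caveat. You want a curve $T\subset\K_g$ through $(X,L)$ meeting the ``very general'' locus in a dense subset of $T$. But the good locus here is only the complement of a countable union of \emph{nowhere dense} subsets of $\K_g$ (Baire second category), not the complement of countably many proper closed subvarieties; a set of second category in $\K_g$ can meet a given curve $T$ in a meagre --- even empty --- subset of $T$. So even the transfer step is not automatic and would need the bad locus to be controlled more algebraically (e.g.\ contained in countably many proper subvarieties) than Corollary~\ref{COR001} provides. This does not change your conclusion, since the decomposition problem above is already fatal, but it is a second place where the argument would need repair.
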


\begin{rem}\label{REM001}
This was known for a very general $K3$ surface using a deformational argument \cite{M-M}.
However, to deal with every projective $K3$ surface, some new methods are needed. Recently, 
some substantial progress has been made on the conjecture.
This was proved in \cite{BHT} for $g = 2$ and $\Pic(X) = \BZ$ using characteristic $p$ reduction.
Their method was further developed in \cite{L-L}, where the conjecture was settled in all major cases
with the only exception $\rank \Pic(X) = 2$ {(see further remark below)}. Also the conjecture was known for all elliptic $K3$ surfaces
\cite{BT1} (see also \cite{H-T}).
So the only unknown cases are $K3$ surfaces of Picard rank two which do not admit an elliptic fibration.
However, their method does not seem to apply to the strong topology.
Density of rational curves on $K3$ surfaces in both Zariski and strong topologies is related to
Lang's conjecture on these surfaces \cite{La}.
\end{rem}

\begin{rem}\label{REM100}
It was pointed out to us by the referee that Zariski density of rational curves is known for ``most'' $K3$ surfaces of Picard rank
$2$ since such a surface $X$ either has an infinite automorphism group or admits an elliptic fibration if $X$ does not contain a
$(-2)$-curve. So the only outstanding cases of \conjref{CONJ001} are $K3$ surfaces of Picard rank $2$ containing $(-2)$-curves, e.g.,
a quartic surface with a node.
\end{rem}

Although we are unable to prove the density of rational curves on every $K3$ surface in the strong topology,
we can do this for an elliptic $K3$ surface $X$ as long as the elliptic fibration $X\to \PP^1$ admits a rational
non torsion (nt) multisection. Here a rational nt multisection $C$ of $\pi:X\to\PP^1$ is an irreducible rational curve $C\subset X$
such that $C$ meets the general fiber $X_b$ of $\pi$ at (at least) one point $p$ satisfying $L - mp\not\in
J(X_b)_\tors$, where $L$ is an ample line bundle on $X$, $m = L\cdot X_b$ and $J(X_b) = \Pic^0(X_b)$ is the
Jacobian of the elliptic curve $X_b$ \cite{BT1}.

Using the classical Kronecker's theorem (see \ssecref{SS001})
together with a study of normal functions associated to an elliptic fibration, we are able to prove that

\begin{thm}\label{THM004}
The union of rational curves is dense in the strong topology on an elliptic $K3$ surface $\pi: X\to \PP^1$
if there exists a rational nt multisection $C$ of $\pi$.
\end{thm}

\begin{rem}\label{REM101}
The notion of rational nt multisections is also crucial in the work of Bogomolov-Tschinkel
\cite{BT1} and Hassett-Tschinkel \cite{H}.
Originally, we quoted a result in the
previously mentioned paper \cite[Theorem 1.8]{BT1} that every elliptic surface $X$ with
$\rank \Pic(X) \le 19$ has a rational nt multisection. It was again pointed out to us by the referee that
one of the constituent lemmas in that paper \cite[Lemma 3.26]{BT1} is incorrect, unfortunately (see \cite{H}
for a counterexample). At the moment, the above statement is still unknown and we cannot yet conclude the density of rational curves
on every elliptic $K3$ surface of Picard rank $\le 19$.
\end{rem}

We want to point out that density of rational curves on elliptic $K3$ surfaces does not imply the same
on a general $K3$ surface directly despite the fact that elliptic $K3$ surfaces are dense in the moduli space of $K3$ surfaces
since a rational {nt} multisection does not deform to a rational curve on a general $K3$ unless it is
a multiple of the polarization divisor.

\subsection{Density of elliptic curves}

For convenience, we will call a point {\it Baire general\/} if it lies in the complement of a countable union of nowhere dense
subsets.

Of course, every $K3$ surface $X$ is covered by one-parameter families of elliptic curves. It is natural to ask whether these curves
are dense when lifted to the first jet space $\BP T_X$ of $X$. Here the lifting $df: C\dashrightarrow \BP T_X$ of a map $f: C\to X$ is induced by
the map $f_*: T_C \to f^* T_X$ on the tangent sheaves.

For every $n\in \BZ^+$, we let $\W_{g,n}$ be the closure of the subscheme of $\BP H^0(\CS_g, nL)$ whose fiber over a general $(X, L)$ consists of irreducible elliptic curves in $|nL|$ and let
\begin{equation}\label{E003}
\E_{g,n} = \{ (X,L,E,p): (X,L,E)\in \W_{g,n}, p\in E\}
\subset \W_{g,n}\times_{\K_g} \CS_g
\end{equation}
be the universal family over $\W_{g,n}$.

\begin{thm}\label{THM002}
Let $\varphi: \E_{g,n} \dashrightarrow \BP T_{\CS_g/\K_g}$ be the rational map 
induced by the map
\begin{equation}\label{E004}
T_{\E_{g,n}/\W_{g,n}} \to T_{\CS_g/\K_g}
\end{equation}
on the relative tangent sheaves. Then
\begin{equation}\label{E000}
\bigcup_{n=1}^\infty \varphi(\E_{g,n})
\end{equation}
is dense in $\BP T_{\CS_g/\K_g}$ for all $g\ge 2$,
where $\varphi(\E_{g,n})$ is the proper transform of $\E_{g,n}$ under $\varphi$.
\end{thm}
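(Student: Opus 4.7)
The plan is to mirror the strategy of \thmref{THM001}, combining the dynamics of self rational maps on elliptic $K3$ surfaces with deformation theory, while keeping careful track of tangent directions throughout. First I would reduce to a fiberwise statement: for a very general $(X,L)\in \K_g$, the fiber of $\bigcup_n \varphi(\E_{g,n})$ over $(X,L)$ is dense in $\BP T_X$. Since the Noether--Lefschetz locus of elliptic $K3$ surfaces with a section is a countable union of divisors in $\K_g$, it is dense in the analytic topology, and it suffices to establish density for such an elliptic $K3$ and then propagate to nearby $K3$ surfaces by a deformation argument analogous to that of \thmref{THM001}.

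So let $\pi: X\to \BP^1$ be an elliptic fibration with a section $\sigma$, and let $[m]: X\dashrightarrow X$ denote fiberwise multiplication by $m$ with respect to $\sigma$. Given an initial elliptic curve $E_0\subset X$ (for instance a low-degree multi-section of $\pi$), the pullback $[m]^{-1}(E_0)$ contains an irreducible elliptic component $E_m$ which is an \'etale cover of $E_0$; its class in $\Pic(X)$ grows like $m^2[E_0]$ modulo the lattice generated by the fibration, so $E_m\in |n_m L|$ with $n_m\to\infty$. This produces an infinite sequence of elliptic curves $E_m\subset X$ to which we can apply $\varphi$.

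The heart of the argument, and the principal obstacle, is showing that the jet-liftings $\widetilde{E_m}\subset \BP T_X$ are jointly dense as $m$ varies and $E_0$ ranges over a suitable family of initial curves. At a preimage $q\in [m]^{-1}(p)$, the tangent direction of $E_m$ is $(d[m]_q)^{-1}(T_p E_0)$; in fibered coordinates $(t,y)$ adapted to $\pi$, the differential $d[m]_q$ is upper triangular with diagonal $(1,m)$, so the tangent directions of $E_m$ cluster toward the base direction as $m\to\infty$. To counteract this fiberwise contraction, I would combine equidistribution of $[m]^{-1}(p)$ in $X$ (spreading the base points densely) with variation of $E_0$ inside a covering one-parameter family of elliptic curves, possibly twisting $\sigma$ by torsion sections or by changing the elliptic pencil. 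Together these should produce tangent directions that fill an open subset of each fiber $\BP T_{X,p}$ over a dense set of $p$, yielding density in $\BP T_X$.

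Finally, the passage from the elliptic-fibration locus to a dense open subset of $\K_g$ follows by deformation theory, as in \thmref{THM001}: each marked triple $(E_m, p_m, v_m)$ consisting of an elliptic curve, a point on it, and a tangent direction extends in a $(\dim \K_g)$-dimensional family inside $\BP T_{\CS_g/\K_g}$, and this deformation preserves the jet lifting. Thus fiberwise density over a dense subset of $\K_g$ propagates to the density statement in $\BP T_{\CS_g/\K_g}$ asserted by the theorem. I expect the main difficulty to lie squarely in the density-of-tangent-directions step, where the fiberwise contraction under $d[m]$ must be compensated by a genuinely two-parameter source of initial data.
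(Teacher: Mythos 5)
You have correctly located the crux --- producing density of \emph{tangent directions}, not just of points --- but your proposal does not close it, and the mechanism you suggest is unlikely to work as stated. As you yourself compute, the differential of fiberwise multiplication expands the fiber direction, so the tangent directions of the preimage curves $E_m$ collapse onto the horizontal (base) direction of $\pi$ as $m\to\infty$. Adding equidistribution of $[m]^{-1}(p)$ and a one-parameter variation of $E_0$ (or torsion translates of the section) only yields, at each point $p$, a countable collection of directions in $\BP T_{X,p}$ all of whose accumulation points lie at the single horizontal direction; nothing in the proposal produces directions filling an open subset of the $\BP^1$-fiber, and your closing sentence concedes that this step is not established. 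There are also secondary unverified claims (that $[m]^{-1}(E_0)$ has an irreducible elliptic component despite $E_0$ crossing the singular fibers, and that its class forces $n_m\to\infty$ within the polarization $|n L|$), but the tangent-direction step is the genuine gap.

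The paper avoids controlling tangent directions of smooth elliptic curves altogether. It takes the rational curve $C$ whose translates are dense on $X$ (from \thmref{THM001}), finds an irreducible elliptic curve $B\in|nL-C|$ meeting $C$ transversely at a point $p$ (\propref{PROP002} and \lemref{LEM004}), and smooths the nodal curve $B\cup C$ to irreducible elliptic curves $\CC_t\in|nL|$. The key observation is what happens to the jet lifting in this degeneration: since $T_{B,p}$ and $T_{C,p}$ are distinct in $T_{X,p}$, the lifts of $B$ and $C$ to $\BP T_X$ are disjoint over $p$, so in the stable reduction $\wt{\CC}_0$ of the lifted family the proper transforms $\wt{B}$ and $\wt{C}$ must be joined by a tree of rational curves dominating the fiber $\BP T_{X,p}\isom\BP^1$. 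Hence the \emph{entire} fiber $\BP T_{X,p}$ lies in $\overline{\varphi(\E_{g,n})}$, for every $p\in C$ (as $B$ moves in its one-parameter family the node $p$ sweeps out $C$), and density of the curves $C$ on $X$ then gives density in $\BP T_X$ with no analysis of tangent directions of the elliptic curves themselves. If you want to salvage your approach, this ``bubbling over the node'' argument is the missing ingredient; without it, or some substitute producing a full $\BP^1$ of directions at a dense set of points, the proof does not go through.
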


It follows that the union of $\varphi(\E_{X,nL})$ is dense in $\BP T_X$ for a Baire general $(X, L)\in \K_g$,
where $\E_{X,nL}$ is the fiber of $\E_{g,n}$ over the point $(X, L)\in \K_g$.

\subsection{Hyperbolic geometry of $K3$ surfaces}

One of the reasons we are interested in the elliptic curves on a $K3$ surface $X$
comes from the fact that they are the images of holomorphic maps $\BC\to X$. So they
are closely related to the hyperbolic geometry of $X$. {Let us recall} 
the definition of  the {\it Kobayashi-Royden\/} (KR) pseudo-metric on a complex
manifold $X$ (cf. \cite{K}): for a point $p\in X$ and a nonzero tangent vector
$v\in T_{X, p}$, we define
\begin{equation}\label{E033}
\begin{split}
||v||_\kappa = \inf \{ \lambda > 0: & \exists \text{ a holomorphic map } f:
  \Delta\to X\\
&\text{ with } f(0) = p, f_*(\partial/\partial z) =
  \lambda^{-1} v \}
\end{split}
\end{equation}
Obviously, if there is a holomorphic $f: \BC\to X$ such that $f(0) = p$ and
$f_* (\partial/\partial z)
= v$ for some tangent vector $v\in T_{X,p}$, then $||v||_\kappa = 0$. In particular, if there is
holomorphic dominant map $f: \BC^n\to X$, then the KR pseudo-metric vanishes
everywhere on $X$. In \cite{Bu-L}, 
G. Buzzard and S. Y. Lu classified all the algebraic surfaces that are holomorphically
dominable by $\BC^2$.
They settled every single case except $K3$ surfaces,
for which they proved all elliptic and Kummer $K3$ surfaces can be holomorphically dominated by $\BC^2$.
But it is unknown whether a general $K3$ surface can be dominated by $\BC^2$ or has everywhere
vanishing KR pseudo-metric, although this is expected to be true.

\begin{conj}[Buzzard-Lu]\label{CONJ002}
Every complex $K3$ surface is holomorphically dominable by $\BC^2$. As a consequence, it has everywhere
vanishing KR pseudo-metric.
\end{conj}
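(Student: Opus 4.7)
The plan is to deduce both assertions from Theorem 1.3 in two largely independent stages: first the vanishing of the Kobayashi--Royden pseudo-norm (the ``As a consequence'' clause), then the dominability by $\BC^2$ itself, which is the genuinely hard content.

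For the KR vanishing, every elliptic curve $E\subset X$ is uniformized by $\BC$, so every tangent vector tangent to $E$ has $\|\cdot\|_\kappa=0$. By Theorem 1.3 such vectors are dense in $\BP T_X$ for a Baire-general $(X,L)\in\K_g$, and upper semicontinuity of $\|\cdot\|_\kappa$ on $T_X$ then forces it to vanish identically. To extend from Baire-general to every projective $K3$ surface, one invokes the upper semicontinuity of $\|\cdot\|_\kappa$ in holomorphic families $\CX\to T$ of compact complex manifolds: each projective $K3$ occurs as a limit in $\K_g$ of Baire-general ones, so its KR pseudo-norm is also zero. To pass to arbitrary, possibly non-projective, complex $K3$ surfaces, one runs the same family-wise semicontinuity over the $20$-dimensional period domain of marked $K3$ surfaces, using the classical density of projective $K3$ periods.

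For dominability by $\BC^2$, for a fixed projective $X$ I would pick, via Theorem 1.3, an integer $n$ and a $1$-parameter subfamily $\pi:\E\to B\subset\W_{X,nL}$ of elliptic curves whose tangent lifts are dense in $\BP T_X$, and, via Theorem 1.1, pick a rational multisection $S\subset X$ of $\pi$. Pulling back gives an elliptic surface $\pi_S:\E_S\to S\cong\BP^1$ with a tautological section; after a finite base change $S'\to S$ killing the monodromy of $R^1\pi_{S*}\BZ$, the fiberwise Weierstrass uniformization assembles into a holomorphic map $\BC\times (S'\setminus\Sigma)\to\E_{S'}\to X$, where $\Sigma\subset S'$ is the preimage of the singular fibers. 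Provided $S'\setminus\Sigma$ contains a Zariski-open copy of $\BC$, this upgrades to a holomorphic map $\BC^2\to X$, and density of the tangent lifts in $\BP T_X$ forces its differential to have maximal rank somewhere, giving a dominant map. Non-projective $K3$ surfaces are then handled by deforming the dominant map in a family, using Gromov compactness to control limits of the pulled-back holomorphic curves.

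The main obstacle, which I expect to dominate any serious attack, is producing a \emph{compatible} pair $(S,\pi)$: Theorems 1.1 and 1.3 separately supply rational curves and $1$-parameter families of elliptic curves, but there is no a priori guarantee that a rational curve can be chosen as a multisection of a chosen family whose pulled-back elliptic surface has few enough singular fibers that $S'\setminus\Sigma$ contains an affine line. The Kodaira identity $\chi=24$ for an elliptic $K3$ forces many singular fibers in general, so the universal cover of $S'\setminus\Sigma$ is typically the upper half-plane rather than $\BC$, and the naive uniformization fails to factor through any map out of $\BC^2$. Circumventing this---perhaps by using the rational components of the singular fibers themselves as further sources of $\BC$-parameters, or by splicing together two distinct families of elliptic curves meeting along a common rational curve and invoking Brody reparametrization---is where the genuine new input beyond the density results of this paper would be required.
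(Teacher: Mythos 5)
This statement is labeled as a \emph{conjecture} (attributed to Buzzard--Lu) precisely because the paper does not prove it; the only thing the paper establishes in this direction is the much weaker Corollary \ref{COR002}, namely that for a Baire general $(X,L)\in\K_g$ and a Baire general $p\in X$ the zero locus of $\|\cdot\|_\kappa$ is \emph{dense} in $T_{X,p}$. Your proposal does not close the gap between that density statement and the conjecture, and the first stage contains a concrete error: the Kobayashi--Royden pseudo-norm is \emph{upper} semicontinuous on $T_X$, i.e.\ $\limsup_{v'\to v}\|v'\|_\kappa\le\|v\|_\kappa$. Knowing that $\|\cdot\|_\kappa$ vanishes on a dense set therefore gives no control on $\|v\|_\kappa$ at a point $v$ outside that set --- the inequality runs the wrong way, and indeed a function that is $0$ on a dense set and positive elsewhere can perfectly well be upper semicontinuous. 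To conclude identical vanishing from density of zeros you would need \emph{lower} semicontinuity, which the KR pseudo-norm does not enjoy. The same reversal invalidates your passage to limits in families (``each projective $K3$ occurs as a limit of Baire-general ones, so its pseudo-norm is also zero''): semicontinuity in families again only bounds nearby fibers in terms of the limit fiber, not the other way around. This is exactly why the introduction states that it is unknown whether a general $K3$ surface has everywhere vanishing KR pseudo-metric.

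For the dominability claim you have, to your credit, identified the fatal obstruction yourself: an elliptic $K3$ has $\chi=24$, so the pulled-back elliptic surface over a rational multisection has many singular fibers, the punctured base is hyperbolic, and the fiberwise uniformization does not assemble into a map out of $\BC^2$. This is precisely the difficulty that restricted Buzzard and Lu to elliptic and Kummer $K3$ surfaces (where additional structure --- a section, or a torus cover --- is available), and nothing in Theorems \ref{THM001} or \ref{THM002} supplies the missing input. So the proposal should be read as a plausible strategy sketch plus an honest list of open problems, not a proof; the statement remains a conjecture both in the paper and after your argument.
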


By \thmref{THM002}, we know at least that the following holds.

\begin{cor}\label{COR002}
For $g\ge 2$, a Baire general $(X,L)\in \K_g$ and a Baire general $p\in X$, the set
$\{ v\in T_{X,p}: ||v||_\kappa = 0 \}$
is dense in $T_{X,p}$.
\end{cor}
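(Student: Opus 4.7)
The plan is to deduce Corollary \ref{COR002} from Theorem \ref{THM002} by combining a Baire-category descent from $\BP T_{\CS_g/\K_g}$ down to a generic projective tangent space $\BP T_{X,p}$ with the standard observation that any tangent direction along an irreducible elliptic curve is killed by $\|\cdot\|_\kappa$.

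For the descent, I would run the same Baire-category argument used to obtain Corollary \ref{COR001} from Theorem \ref{THM001}, applied in two successive stages to the fibrations $\BP T_{\CS_g/\K_g} \to \CS_g \to \K_g$. Both spaces are second countable in the strong topology, and each $\varphi(\E_{g,n})$ is a closed subset, so density of the countable union in $\BP T_{\CS_g/\K_g}$ forces, for Baire general $(X,L)\in\K_g$ and then Baire general $p\in X$, density of the fiber $\bigcup_n \varphi(\E_{X,nL})_p$ in $\BP T_{X,p}$. Equivalently, the set of projective tangent directions at $p$ realized by irreducible elliptic curves $E\in |nL|$ meeting $p$ as a smooth point is dense in $\BP T_{X,p}$.

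For the second step, let $E\subset X$ be such an elliptic curve and $\nu:\wt E\to E$ its normalization. Composing a universal cover $\BC\to\wt E$ (or an entire surjection onto $\wt E=\BP^1$ in a degenerate case) with $\nu$ and the inclusion $E\hookrightarrow X$ yields a holomorphic $f:\BC\to X$ whose image contains every smooth point of $E$. After translating we may arrange $f(0)=p$ and $f_*(\partial/\partial z)|_0=v$ for some nonzero $v\in T_{X,p}$ tangent to $E$; precomposing with $z\mapsto\mu z$ realizes $\mu v$ the same way. Restricting these maps to arbitrarily small disks then shows $\|\mu v\|_\kappa=0$ for every $\mu\in\BC$, so $K_p:=\{v\in T_{X,p}:\|v\|_\kappa=0\}$ is closed under scalar multiplication by $\BC$ and contains the whole complex line through each tangent direction realized by an elliptic curve through $p$.

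Finally, $K_p$ contains a $\BC^*$-invariant subset projecting onto a dense subset of $\BP T_{X,p}$. Because $T_{X,p}\setminus\{0\}\to\BP T_{X,p}$ is a continuous open map with $\BC^*$-fibers, this forces $K_p\setminus\{0\}$ to be dense in $T_{X,p}\setminus\{0\}$, and hence $K_p$ is dense in $T_{X,p}$. I expect the only subtle step to be the Baire-category descent, which has to be organized so that each exceptional locus is genuinely nowhere dense in the appropriate fiber; this is handled exactly as in the passage from Theorem \ref{THM001} to Corollary \ref{COR001}.
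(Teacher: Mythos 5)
Your proposal is correct and is essentially the argument the paper intends: Corollary \ref{COR002} is stated as a direct consequence of Theorem \ref{THM002}, obtained by the same Baire-category descent used to pass from Theorem \ref{THM001} to Corollary \ref{COR001}, combined with the observation (already made in the introduction following \eqref{E033}) that a tangent vector realized by a holomorphic map $\BC\to X$ through an elliptic curve has vanishing KR pseudo-metric, and then rescaling to pass from density in $\BP T_{X,p}$ to density in $T_{X,p}$. No substantive difference from the paper's route.
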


%\begin{rem}\label{REM005}
%So far we have some technical difficulties dealing with $K3$ surfaces of genus $g = 2$.
%Everything here is proved for $g \ge 3$.
%\end{rem}

The layout of this paper is as follows. We will prove our main theorems in \secref{SEC002}. In \secref{SEC003},
we will re-interpret a key step of our proof in terms of Poincar\'e normal functions and prove \thmref{THM004}.

\medskip
We are grateful to the referee for suggesting improvements to our paper.

\section{Proofs of \thmref{THM001} and \ref{THM002}}\label{SEC002}

\subsection{Elliptic $K3$ surfaces}

Our strategy is to show that rational curves are dense on $X$ for $(X,L)$ in a dense subset
of $\K_g$. Then \thmref{THM001} will follow easily.
It is well known that Kummer surfaces are dense in the moduli space of polarized $K3$ surfaces.
This implies that polarized elliptic $K3$ surfaces are dense.
A general projective elliptic $K3$ surface has Picard lattice given by
\begin{equation}\label{E001}
\begin{bmatrix}
2g-2 & m\\
m & 0
\end{bmatrix}
\end{equation}
where $m$ is a positive integer.
That is, the Picard group $\Pic(X)$ of $X$ is generated by effective classes $L$ and $F$ satisfying
\begin{equation}\label{E002}
L^2 = 2g - 2, LF = m \text{ and } F^2 = 0
\end{equation}
and the elliptic fibration $\pi: X\to \BP^1$ is given by the pencil $|F|$.

Let $\CP_{g,m}$ be the moduli space of the triples $(X, L, F)$, where $X$ is a $K3$ surfaces whose Picard lattice contains
\eqref{E001}, generated by $L$ and $F$, as a primitive sublattice.
Slightly abusing terminology, we sometimes treat $\CP_{g,m}$ as
a subscheme of $\K_g$; more precisely, it is a finite cover of a subscheme of $\K_g$.

The general theory of $K3$ surfaces tells us that $\CP_{g,m}$
is irreducible of codimension $1$ in $\K_g$ for each
pair $(g,m)$. Also the union of $\CP_{g,m}$ is dense in $\K_g$, as mentioned above. In fact, it follows from the standard theory
on the periodic domain of $K3$ surfaces (cf. \cite{BPV}) that we have the slightly stronger statement:
 \begin{equation}\label{E007}
\bigcup_{2|m} \CP_{g,m} \text{ is dense in $\K_g$}
\end{equation}
for all $g \ge 2$.

\begin{rem}\label{REM002}
The choice of $m$ being even is purely technical. As we will see,
it simplifies the construction of the degeneration of elliptic $K3$ surfaces.
%It helps us cutting down on ``corner cases''.
It could be removed at the cost of making our later argument more complicated.
\end{rem}

\subsection{Dynamics under self rational maps}\label{SS001}

An elliptic $K3$ surface admits self rational maps induced by fiberwise 
elliptic curve endomorphism (cf. \cite{D}).

Let $(X, L)\in \CP_{g,m}$. Fixing $A\in \Pic(X)$ with $A F = a$,
we can construct a rational map
$\phi_A : X\dashrightarrow X$ by sending a point $p$ lying on a smooth fiber $X_q = \pi^{-1}(q)$ to
the point $A - (a-1) p$ on $X_q$ using the group structure of the elliptic curve $X_q$,
by which we mean that we send $p$ to the unique point $p'\in X_q$ given by
\begin{equation}\label{E300}
A \big|_{X_q} \sim_{\rat} (a-1)p + p'
\end{equation}
on $X_q$. Obviously,
$\phi_A$ is dominant unless $a = 1$. Of course, this construction works for all
fibrations of abelian varieties, not just elliptic $K3$'s. 

Let $C\subset X$ be an irreducible rational curve which is not contained in a fiber of $\pi$.
The proper transform $\phi_A(C)$ of $C$ under $\phi_A$ is also an irreducible rational curve on
$X$ not contained in a fiber. Naturally, we expect the following to be true.

\begin{prop}\label{PROP001}
For all $g,m\in \BZ^+$ satisfying $g\ge 2$ and $2|m$ and
a Baire general $(X, L)\in \CP_{g,m}$, there exists an irreducible rational curve
$C\subset X$ such that the set
\begin{equation}\label{E013}
\bigcup_{A\in \Pic(X)} \phi_A(C) = \bigcup_{n\in \BZ} \phi_{nL}(C)
\end{equation}
is dense on $X$.
\end{prop}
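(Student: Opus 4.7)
The approach is to analyze the orbit of a single rational curve $C \subset X$ under the self-maps $\phi_{nL}$, fiber by fiber. On a smooth fiber $X_q$ meeting $C$ in $k = C\cdot F$ points $p_1(q),\dots,p_k(q)$, the defining relation \eqref{E300} yields in the group law of $X_q$ the identity
\begin{equation*}
\phi_{nL}(p_i(q)) = p_i(q) + n\,\delta_i(q),
\qquad \delta_i(q) := L|_{X_q} - m\, p_i(q) \in \Jac(X_q) \cong X_q.
\end{equation*}
Hence $\bigcup_n \phi_{nL}(C)\cap X_q$ is a union of cosets of the cyclic subgroups $\langle\delta_i(q)\rangle$, and density of \eqref{E013} in $X$ reduces to showing that for one $i$ (say $i=1$) and for $q$ in a dense subset of $\BP^1$, the group $\langle\delta_1(q)\rangle$ is Euclidean-dense in $X_q$. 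To globalize $\delta_1$ I would let $\tilde C\to C$ denote the normalization and set $\tilde X := X\times_{\BP^1}\tilde C$; then $\tilde X\to\tilde C$ is an elliptic surface with tautological section $\tilde p$, and $L|_{\tilde X} - m\tilde p$ defines a Mordell-Weil class $\delta$ on $\tilde X/\tilde C$ specializing fiberwise to $\delta_1$.

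The technical heart of the proof is to show that for a Baire general $(X,L)\in\CP_{g,m}$, and for a suitably chosen rational curve $C$ on $X$ (for instance a nodal member of $|L|$, whose existence is guaranteed by Bogomolov--Mumford), the class $\delta$ is \emph{non-torsion}. For each fixed $N\ge 1$ the condition $N\delta=0$ is closed on the appropriate parameter space of triples $(X,L,C)$, so once one example $(X_0,L_0,C_0)$ with non-torsion $\delta_0$ has been exhibited, the countable union of torsion loci over $N$ is meager and non-torsion holds off a meager set. The one example would be produced by a concrete degeneration of elliptic $K3$ surfaces inside $\CP_{g,m}$, exploiting the hypothesis $2\mid m$ (Lemma \ref{LEM001}, Remark \ref{REM002}) to arrive at a limit on which $\delta$ admits a direct intersection-theoretic or height computation.

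Assuming $\delta$ non-torsion, fiberwise density of the orbit follows from Weyl equidistribution combined with a Baire-category argument. In local period coordinates $\tau(t)$ on $\tilde C$, write $\delta(t) = a(t) + b(t)\tau(t)$ modulo $\BZ+\BZ\tau(t)$. Euclidean density of $\langle\delta(t)\rangle$ in the fiber is equivalent to $\BQ$-linear independence of $1, a(t), b(t)$, which fails exactly on the countable union of sets $\{t : n_1 a(t) + n_2 b(t) = c\}$ indexed by $(n_1, n_2, c)\in\BZ^3\setminus\{0\}$ with $(n_1,n_2)\ne 0$. Each of these is a proper real-analytic subset of $\tilde C$: any identity $n_1 a + n_2 b \equiv c$ would force, via a direct $\bar\partial$-calculation, the holomorphic lift $\tilde\delta$ to be of the form $\beta\tau+\gamma$ with $\beta, \gamma \in \BR$; monodromy compatibility on the non-isotrivial elliptic surface $\tilde X/\tilde C$ would then pin $(\beta,\gamma) \in \BQ^2$, contradicting non-torsion. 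Baire category therefore furnishes a dense set of $t$'s for which the fiberwise orbit is Euclidean-dense, and density of \eqref{E013} in $X$ follows.

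The main obstacle is the non-torsion step: one needs a tractable degeneration within $\CP_{g,m}$ on which the Mordell-Weil class $\delta$ can be computed explicitly and shown to be non-torsion. All the other ingredients -- the fiberwise group-law identity, the lifting to a Mordell-Weil class on $\tilde X/\tilde C$, and the passage from non-torsion to Euclidean density -- are essentially formal once that input is in hand.
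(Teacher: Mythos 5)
Your outline reproduces the paper's \emph{alternative} proof of this proposition (Section 3, via normal functions), not its primary one. The fiberwise identity $\phi_{nL}(p)-p=n(L-mp)$ in $J(X_q)$, the base change to an elliptic surface over the normalization of $C$ with its tautological section, the packaging of $L-mp$ as a Mordell--Weil/normal-function class $\delta$, and the implication ``$\delta$ nontorsion $\Rightarrow$ $\langle\delta(t)\rangle$ dense for a Baire-general $t$'' all match the paper; your last step is essentially Proposition \ref{P278}, where a rational relation $a_1x(t)+a_2y(t)+a_3=0$ forces the normal function to be flat, hence its topological invariant to be torsion over $U$, and injectivity of $H^1(\Gamma,\cdot)\to H^1(U,\cdot)$ gives the contradiction. (Your phrasing ``monodromy pins $(\beta,\gamma)\in\BQ^2$'' is not quite the mechanism -- the conclusion is vanishing of the topological invariant in $H^1(U,R^1\rho_{U,\ast}\BQ)$, not rationality of the coefficients -- but the idea is right.) By contrast, the paper's primary proof avoids normal functions entirely: it shows the map $\gamma:\CC_{g,m,A}\to\overline{\m}_{1,2}$, $p\mapsto(J(E_p),0,L-mp)$, is dominant by degenerating $X$ to a union of two rational surfaces and tracking limiting rational curves into the boundary $\overline{\m}_{0,4}$, then applies Lemma \ref{LEM002}.

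The genuine gap is the step you yourself flag as the ``main obstacle'': nontorsionness of $\delta$. Nothing in your write-up establishes it, and this is where all the content lives, so as it stands the proof is incomplete. You should also know it is closable more cheaply than you anticipate: no degeneration is needed for the class itself. On the base-changed surface $X_\Gamma$, adjunction together with $K_{X_\Gamma}=h^{\ast}K_X+kF=kF$ gives $\sigma(\Gamma)^2=b=-(2+k)<0$, hence $mb\ne 2g-2$ and $\{F,h^{\ast}L,\sigma(\Gamma)\}$ are $\BQ$-independent in $H^2(X_\Gamma,\BQ)$; since $h^{\ast}L-m\sigma(\Gamma)$ pushes forward to zero on $\Gamma$, this independence forces its class in $H^1(\Gamma,j_{\ast}R^1\rho_{U,\ast}\BZ)^{1,1}$ to be nonzero, \emph{provided} all fibers of $X_\Gamma\to\Gamma$ are irreducible. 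That proviso is the one place a degeneration is genuinely required: one must produce a nodal rational $C\in|L|$ meeting the $24$ singular fibers of $X\to\PP^1$ transversely (Lemma \ref{LEM005} in the paper), since otherwise the class must be corrected by components of the bad fibers and the nontorsion argument becomes substantially harder. Your proposal does not address this transversality issue at all, and without it the ``direct intersection-theoretic computation'' you defer to does not go through as stated.
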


We cannot yet conclude \thmref{THM001} from \propref{PROP001} since $\phi_A(C)$ may not lie on
the fiber $\CC_{g,n}$ over the point $(X,L)$. Indeed, if $C\in |a L + bF|$,
$\phi_{kL}(C)\in |aL + b_k F|$ for some $b_k\in \BZ$.
As $|k|\to\infty$, $b_k\to\infty$ since we have only finitely many
rational curves in each linear series. Hence
$\phi_{kL}(C)\not\sim_\text{rat} nL$ for all $n\in \BZ$,
when $|k|$ is sufficiently large. So the rational curve $\phi_{kL}(C)$ alone cannot be deformed 
to a rational curve on a general $K3$ surface.
But we can find a rational curve $B_k\subset X$
such that $B_k + \phi_{kL}(C)\sim_\text{rat} nL$ for some $n\in \BZ$ and
the union $B_k \cup \phi_{kL}(C)$ can be deformed to an irreducible rational curve on a
general $K3$ surface. Namely, we can prove the following.

\begin{prop}\label{PROP002}
For all $g,m\in \BZ^+$ satisfying $g\ge 2$ and $2|m$, 
a general $(X,L)\in \CP_{g,m}$ and an irreducible rational curve $C\subset X$ such that
$C\not\sim_\text{rat} lL$ for any $l\in \BZ$,
\begin{itemize}
\item there exists an irreducible rational curve $B\subset X$ such that
$B\cup C$ lies on an irreducible component of $\CC_{g,n}$ that dominates $\K_g$;
\item there exists an irreducible elliptic curve $B\subset X$
such that $B\cup C$ lies on an irreducible component of $\E_{g,n}$ that
dominates $\CS_g$.
\end{itemize}
\end{prop}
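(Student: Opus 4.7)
The plan is to realize $B\cup C$ as a flat limit of irreducible rational (resp.\ elliptic) curves in $|nL|$ on a one-parameter deformation of $(X,L)$ inside $\K_g$, in such a way that the resulting component of $\CC_{g,n}$ (resp.\ $\E_{g,n}$) projects dominantly to $\K_g$ (resp.\ $\CS_g$). The starting observation is that since $[C]=aL+bF$ with $b\ne 0$, the class $[C]$ fails to be effective on a $K3$ of generic Picard rank one; hence as $(X,L)$ deforms in $\K_g$ away from the codimension-one locus $\CP_{g,m}$, the curve $C$ cannot persist alone but must pair with a companion curve $B$ of class $(n-a)L-bF$ so that $B+C\sim nL$ remains effective on the deformed surface.

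The main task is to construct $B$ with the required class, geometric genus, and intersection pattern with $C$. For the rational part I would seek an irreducible rational curve $B\in |(n-a)L-bF|$ for suitably chosen $n$, meeting $C$ transversely in nodes; for the elliptic part I would exploit the fact that the Picard lattice $\begin{bmatrix}2g-2 & m\\ m & 0\end{bmatrix}$ admits a second isotropic class $F'=mL-(g-1)F$ with ${F'}^{2}=0$, yielding a second elliptic fibration on $X$ from which elliptic $B$ can be drawn in non-standard linear systems. Existence of such $B$ with the right intersection numbers against $C$ would be established by a further degeneration: specialize $X$ to an elliptic $K3$ $X_{0}$ with additional reducible Kodaira fibers, construct the analogue of $B\cup C$ on $X_{0}$ as an explicit reducible nodal curve built from multisections, fiber components, and $C$ itself, and then partially smooth back to obtain $B$ irreducible on $X$. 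The hypothesis $2\mid m$ is used here to ensure that the parities of the relevant intersection numbers are compatible with this construction (this is the technical role indicated in Remark 1.5).

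With $B$ in hand, I would invoke a smoothing theorem for nodal curves on $K3$ surfaces (in the spirit of Ran and Tannenbaum) applied to the relative Hilbert scheme over $\K_g$: the reducible nodal curve $B\cup C$ deforms to an irreducible nodal rational (resp.\ elliptic) curve on a nearby $K3$ of generic Picard rank one, along a direction transverse to $\CP_{g,m}$ in $\K_g$. The obstruction to smoothing lies in $H^{1}$ of the normal sheaf of the normalization of $B\cup C$ with appropriate gluing data, and vanishes thanks to the rationality (resp.\ ellipticity of one component) combined with non-negativity of the component self-intersections. A first-order count then shows that the resulting family has the expected dimension and projects dominantly to $\K_g$ (resp.\ $\CS_g$), yielding the required component. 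The main obstacle is the choice of $B$: naively one wants $B\cdot C=1$ so that smoothing the single intersection node produces an irreducible curve of the correct geometric genus, but the resulting Diophantine condition $a(n-a)(2g-2)+bm(n-2a)=1$ typically has no integer solution in $n$; circumventing this requires either a more refined smoothing argument tolerating $B\cdot C>1$ with compensating smoothings of self-nodes on $B$, or the flexibility gained by varying $n$ and the auxiliary degeneration $X_{0}$.
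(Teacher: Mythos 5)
Your skeleton agrees with the paper's: produce an irreducible nodal rational (resp.\ elliptic) curve $B\in|nL-C|$ for $n\gg 0$ by an auxiliary degeneration (the paper gets this from Lemma~\ref{LEM004} and Remark~\ref{REM004}, degenerating to a union of two rational surfaces rather than to an elliptic $K3$ with extra Kodaira fibers, and it obtains the elliptic $B$ from the same construction by smoothing one fewer node --- your second fibration $F'=mL-(g-1)F$ is not needed), and then deform $B\cup C$ off the divisor $\CP_{g,m}$. However, the two places where you are vague are exactly where the content lies, and as written both steps fail. The obstacle you name at the end --- that $B\cdot C=1$ is Diophantinely unavailable --- is real, but the resolution is neither to force a single intersection point nor to ``compensate with smoothings of self-nodes''; it is to abandon the Hilbert scheme for stable maps and take the \emph{partial} normalization $\eta:B^\nu\vee_p C^\nu\to B\cup C\subset X$, gluing the two normalizations at a single chosen node $p\in B\cap C$. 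The domain then has arithmetic genus $0$ (resp.\ $1$) regardless of $B\cdot C$, and the remaining intersections of $B$ with $C$ are merely nodes of the image, invisible to the genus of the map. Without this device your smoothing step has no genus-$0$ object to deform.

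More seriously, your assertion that the obstruction ``vanishes thanks to the rationality \dots combined with non-negativity of the component self-intersections'' is false, and it is precisely the well-known trap the paper flags: for a genus-$0$ stable map in class $nL$ to a $K3$ the virtual dimension is $c_1(T_X)\cdot nL+\dim X-3=-1$, so over the $19$-dimensional $\K_g$ the expected dimension of the family is $18$, one short of what dominance requires, and the relevant $H^1$ does not vanish. The paper's remedy is to place $X$ in a \emph{twisted} (non-projective) two-parameter family $\CS/\Delta^2$ of complex $K3$ surfaces, where the virtual dimension of $\m^{\CS}_{nL,0}$ becomes $1$; the component $V\ni\eta$ then satisfies $\dim V\ge 1$, necessarily lies over the one-dimensional Noether--Lefschetz locus $W=\{t\in\Delta^2:L\in\Pic(\CS_t)\}$, and is flat over $W$ because $\dim V_0=0$, so $V_t\ne\emptyset$ for general $t\in W$ with $\Pic(\CS_t)=\BZ L$. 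Some such semiregularity/twistor-family argument is indispensable here; a Ran--Tannenbaum-style smoothing on the relative Hilbert scheme over $\K_g$ alone cannot yield dominance.
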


Clearly, \propref{PROP001} and \ref{PROP002} together
will give us \thmref{THM001} and \ref{THM002}.

Let $X_q$ be a general fiber $\pi$ and $p\in X_q\cap C$. Then $\phi_{nL}$ sends $p$ to 
the point
\begin{equation}\label{E014}
\phi_{nL}(p) = nL - (mn-1)p
\end{equation}
and hence
\begin{equation}\label{E120}
\phi_{nL}(p) - p = n (\phi_L(p) - p) = n(L - mp)
\end{equation}
in the Jacobian $\Pic^0(X_q) = J(X_q)$ of the elliptic curve $X_q$.

\propref{PROP001} will follow if we can prove that the subgroup of $J(X_q)$
generated by $L-mp$ is dense.
So we naturally ask which points on an elliptic curve, or more generally a compact complex torus, generate a dense
subgroup.
{This is answered by the classical Kronecker's theorem (cf. \cite[Chap. XXIII]{H-W}). For convenience, we
put it in the following form.} 

{\begin{thm}[Kronecker's Theorem]\label{THM200}
Let $A = \BR^n /\BZ^n$ be a compact real torus of dimension $n$. 
For a point $p = (x_1, x_2, ..., x_n)\in A$, $\BZ p = \{ kp: k\in \BZ\}$ is dense in $A$ if and only
if $1, x_1, x_2, ..., x_n$ are linearly independent on $\BQ$.
In particular, the set
\begin{equation}\label{E034}
\big\{p\in A:  \BZ p \text{ is not dense in } A\big\}
\end{equation}
is of the first Baire category.
\end{thm}}

There are two ways we can show that $L-mp$ generates a dense subgroup of $J(X_q)$
using {Kronecker's theorem}. One way is via normal functions. This will be done in \secref{SEC003}.
The other way is to show that $L - m p$ is general in $J(X_q)$ as $X$ and $q$ vary.

First of all, we have to make what we mean by ``general in $J(X_q)$'' precise. Let 
\begin{equation}\label{E015}
\CS_{g,m} = \CS_g\times_{\K_g} \CP_{g,m}
\end{equation}
be the pullback of the universal family $\CS_g$ to $\CP_{g,m}\subset \K_g$ and
let $\CC_{g,m,A}\subset \CS_{g,m}$ be the closed subscheme
whose fiber over a general point $(X,L)\in \CP_{g,m}$ is the union of all irreducible rational curves in $|A|$, where
$A\in \Pic(\CS_{g,m}/\CP_{g,m})$. Note that we have an elliptic fibration
\begin{equation}\label{E044}
\pi: \CS_{g,m} \to \BP^1\times \CP_{g,m}
\end{equation}
given by the pencil $|F|$. The induced map $\CC_{g,m,A}\to \BP^1\times\CP_{g,m}$ is generically finite if it is dominant.

Let $p\in \CC_{g,m,A}$ be a point over a general point $q = \pi(p)\in \BP^1\times\CP_{g,m}$. We have a map $p\to J(E)$ by sending
$p$ to $L - mp$, where $E = \pi^{-1}(q)\subset \CS_{g,m}$
is the fiber of $\pi$ over $q$. Note that $J(E) = \Pic^0(E)$
is the elliptic curve $E$ with a base point $0$ corresponding to the trivial bundle $\CO_E$.
So we have two marked points $(0, L - mp)$ on $J(E)$. Namely, we have a well-defined map
\begin{equation}\label{E045}
\gamma: \CC_{g,m,A} \to \overline{\m}_{1,2}
\end{equation}
sending $p$ to $(J(E), 0, L - mp)$, where $\overline{\m}_{g,n}$
is the moduli space of stable curves of genus $g$ with $n$ marked
points. By saying $L-mp$ is general, we simply mean that $\gamma$ is dominant.

\begin{lem}\label{LEM003}
For all $g,m\in\BZ^+$ satisfying $g\ge 2$ and $2|m$,
there exists an irreducible component of $\CC_{g,m,A}$ dominating $\BP^1\times\CP_{g,m}$ via $\pi$ and
dominating $\overline{\m}_{1,2}$ via $\gamma$ for some $A\in \Pic(\CS_{g,m}/\CP_{g,m})$.
\end{lem}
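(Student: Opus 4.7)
The plan is to produce a suitable class $A \in \Pic(\CS_{g,m}/\CP_{g,m})$ together with an irreducible component of $\CC_{g,m,A}$ that dominates $\BP^1 \times \CP_{g,m}$, and then verify that the restriction of $\gamma$ to this component also dominates $\overline{\m}_{1,2}$, using a degeneration of elliptic $K3$ surfaces that exploits the assumption $2 \mid m$.

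First, I would take $A = L + kF$ with $k$ a positive integer chosen large enough that $|A|$ contains irreducible nodal rational curves on a general $(X,L) \in \CP_{g,m}$. The existence of such rational curves on general polarized $K3$ surfaces with the prescribed Picard lattice is standard (following Mori--Mukai and its subsequent refinements), and deformation theory for nodal rational curves on $K3$ surfaces yields an irreducible component $Z \subset \CC_{g,m,A}$ dominating $\CP_{g,m}$. Since $A \cdot F = m > 0$, each rational curve $C \in |A|$ meets every fiber of $\pi$ in $m$ points, so $Z$ also dominates $\BP^1 \times \CP_{g,m}$ via $\pi$.

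Next I would verify that $\gamma|_{Z}: Z \to \overline{\m}_{1,2}$ is dominant. The composition with the forgetful map $\overline{\m}_{1,2} \to \overline{\m}_{1,1}$ factors through the $j$-invariant map $\BP^1 \times \CP_{g,m} \to \overline{\m}_{1,1}$, and the latter is surjective since the $j$-function of the elliptic fibration on a general elliptic $K3$ in $\CP_{g,m}$ attains every value on $\BP^1$. Hence $\gamma|_Z$ already dominates $\overline{\m}_{1,1}$, and it suffices to prove that for a fixed general $(E,0) \in \overline{\m}_{1,1}$ the assignment $(X,q,p,C) \mapsto L|_{X_q} - mp$ has dense image in $J(E)$ as $(X,q,p,C) \in Z$ ranges over the sublocus with $X_q \cong E$.

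The main obstacle is this last dominance, which I would establish via a Type II Kulikov degeneration $X_0 = R_1 \cup_{E} R_2$ of elliptic $K3$ surfaces in $\CP_{g,m}$, with $R_1, R_2$ rational elliptic surfaces glued along a common elliptic fiber $E$. The hypothesis $2 \mid m$ is used here to ensure that $L$ admits a compatible splitting $L_0 = L_1 + L_2$ with $L_i \in \Pic(R_i)$ whose restrictions to $E$ match and together form a degree-$m$ line bundle. On $X_0$, I would build a rational curve in the class of $A$ by gluing suitable rational curves on $R_1$ and $R_2$ meeting transversally on $E$, then compute $L|_E - mp$ explicitly in terms of the gluing isomorphism identifying $L_1|_E$ with $L_2|_E$. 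By varying this gluing, the value $L|_E - mp$ sweeps out all of $J(E)$. Finally, smoothing the degeneration via standard Kulikov theory transports the dominance to a dense open subset of $\CP_{g,m}$, completing the proof.
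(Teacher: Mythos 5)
Your overall skeleton is reasonable and partially parallels the paper: pick $A$ with $A\cdot F=m$, get a component dominating $\BP^1\times\CP_{g,m}$, note that dominating $\overline{\m}_{1,1}$ is automatic from non-constancy of the $j$-invariant, and reduce to showing that the marked point $L-mp$ moves in $J(E)$ over a general point of $\overline{\m}_{1,1}$. But that last step is the entire content of the lemma, and your proposal asserts it rather than proves it. The claim ``by varying this gluing, the value $L|_E - mp$ sweeps out all of $J(E)$'' is exactly where the difficulty lives, and the mechanism you propose is doubtful: for the glued surface $R_1\cup_E R_2$ to lie in the closure of $\CP_{g,m}$ you must require \emph{both} $L$ and $F$ to descend, i.e. $L_1|_E=L_2|_E$ and $F_1|_E=F_2|_E$ under the gluing; matching two fixed degree-$m$ line bundles on $E$ pins the gluing isomorphism down to a translation by an $m$-torsion point, so with $E$, $R_i$ and $L_i$ fixed you get only finitely many admissible gluings, and each degenerate surface carries only finitely many rational curves in $|A|$, hence finitely many values of $L|_E-mp$. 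You therefore do not obviously obtain a one-parameter family sweeping $J(E)$. There is a second, logical gap: your computation takes place at the double curve $E$, which is the fiber over the node of the degenerate base and corresponds to a boundary point of the surface moduli; a large \emph{special} fiber of $\overline{\gamma(Z)}\to\overline{\m}_{1,1}$ does not by itself bound the general fiber from below (semicontinuity goes the wrong way). One can rescue this using irreducibility of $\gamma(Z)$, but then one must also check that the glued curves on $X_0$ actually smooth into the \emph{chosen} component $Z$; neither point is addressed.

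For contrast, the paper uses a different degeneration and tests $\gamma$ against a different boundary. It degenerates the $K3$ to $R_1\cup R_2$ with $R_i\isom\BF_0$ or $\BF_1$ glued along an anticanonical elliptic curve $D$ that is a \emph{multisection}, not a fiber, of the elliptic pencil; the fibers of $|F|$ then degenerate to $m$-cycles $N_1\cup\cdots\cup N_m$ of rational curves, so the relevant limit points of $\gamma$ land in $\overline{\m}_{0,4}\subset\overline{\m}_{1,2}$. After replacing $\gamma$ ($p\mapsto L-mp$) by the more tractable map $\xi$ sending a pair of intersection points to $(J(E_p),0,p-p')$, dominance reduces to showing that the cross-ratio of the four points $(p,p',q,q')$ on $N_1\isom\BP^1$ (where $\{q,q'\}=N_1\cap D$) is non-constant as $N$ moves in the pencil, which is verified concretely by letting $N_1$ pass through a node of the limiting rational curve $C_1$, forcing $p=p'$ there. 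If you want to keep your interior-fiber strategy, you would need to supply an actual one-parameter family along which $L-mp$ demonstrably moves in $J(E)$ (the paper's Section 3 gives an alternative such argument via normal functions); as written, the proposal has a genuine gap at its central step.
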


This, together with Kronecker's theorem, will give us \propref{PROP001}.

If $\CC_{g,m,A}$ dominates $\BP^1\times\CP_{g,m}$, it
obviously dominates $\overline{\m}_{1,1}$ by
\begin{equation}\label{E052}
\CC_{g,m,A}\xrightarrow{\gamma} \overline{\m}_{1,2}\xrightarrow{\tau} \overline{\m}_{1,1}
\end{equation}
where $\tau$ is the forgetting map. So to show that $\gamma$ is dominant, it suffices
to show that the closure of the image of $\gamma$ contains the boundary component
$\overline{\m}_{0,4}\subset \overline{\m}_{1,2}$. The proof of this fact relies on a degeneration
argument.

\subsection{Deformation of $K3$ surfaces}\label{SSECDEFORM}

Following the idea in \cite{CLM}, we can deform a $K3$ surface to a union of two rational surfaces.
Let $R = R_1\cup R_2$ be the union of
two smooth rational surfaces $R_1$ and $R_2$ meeting transversely along a smooth elliptic
curve $D = R_1\cap R_2$ where $D = -K_{R_i}$ in $\Pic(R_i)$ for $i=1,2$. We see that 
$R$ is simply connected and the dualizing sheaf $\omega_R$ of $R$ is trivial. So it is expected
that $R$ can be deformed to a $K3$ surface. The deformation of $R$ is governed by the map
\begin{equation}\label{E040}
\begin{split}
\Ext(\Omega_R, \CO_R) \to H^0(T^1(R)) &= H^0({\mathcal E}xt(\Omega_R, \CO_R))\\
& = H^0(\CO_D(-K_{R_1} - K_{R_2})).
\end{split}
\end{equation}
Then $R$ can be deformed to a $K3$ surface only if the image of the above map is base point free
in $H^0(T^1(R))$. That is, $R_\text{sing} = D$ can be smoothed when $R$ deforms. This puts
some restrictions on $R_i$. A necessary condition is that $\CO_D(-K_{R_1} - K_{R_2})$ is base point
free. It can be guaranteed if we choose $R_i$ to be Fano.

A deformation of $R$ is a complex $K3$ surface, not necessarily projective.
In order to deform $R$ to a projective $K3$ surface, in particular, to deform $R$ to a $K3$ surface in $\CP_{g,m}$, we need
to construct $R$ in such a way that it has
two line bundles $L$ and $F$ satisfying \eqref{E002}. Let $L_i = L|_{R_i}$ and $F_i = F|_{R_i}$ for $i=1,2$. Then
\begin{equation}\label{E050}
L_1\bigg|_D = L_2\bigg|_D \text{ and } F_1\bigg|_D = F_2\bigg|_D.
\end{equation}
Indeed, $R$ is constructed by gluing $R_1$ and $R_2$ transversely along $D$ such that
\begin{equation}\label{E051}
e_1^* L_1 = e_2^* L_2 \text{ and } e_1^* F_1 = e_2^* F_2
\end{equation}
where $e_i$ is the embedding $D\hookrightarrow R_i$ for $i=1,2$.

As in \cite{CLM} and \cite{C},
we can degenerate every $K3$ surface of genus $g\ge 3$ to a union $R = R_1\cup R_2$ as follows: 
\begin{itemize}
\item if $g\ge 3$ is odd, we let $R_i \isom \BF_0 = \BP^1\times \BP^1$ and $R_1\cup R_2$ be polarized
by the ample line bundle $L$ where
\begin{equation}\label{E900}
L_i = L\bigg|_{R_i} = M_i + \frac{g-1}{2} G_i
\end{equation}
with $M_i$ and $G_i$ being the generators of $\Pic(R_i)$ satisfying
\begin{equation}\label{E150}
M_i^2 = G_i^2 = 0 \text{ and } M_iG_i = 1
\end{equation}
for $i=1,2$;
\item if $g\ge 4$ is even, we let $R_i\isom\BF_1 = \BP(\CO_{\BP^1} \oplus \CO_{\BP^1}(-1))$ and
$R_1\cup R_2$ be polarized by the ample line bundle $L$ where
\begin{equation}\label{E440}
L_i = L\bigg|_{R_i} = M_i + \frac{g}{2} G_i
\end{equation}
with $M_i$ and $G_i$ being
the generators of $\Pic(R_i)$ satisfying
\begin{equation}\label{E151}
-M_i^2 = M_iG_i = 1 \text{ and } G_i^2 = 0
\end{equation}
for $i=1,2$.
\end{itemize}

Note that this does not cover the case $g = 2$. The genus $2$ case will be treated separately in
\ssecref{SSECG2}.

Such $R$ can be deformed to a general $K3$ surface in $\K_g$. In order to deform it to a $K3$
surface in $\CP_{g,m}$, we need to have another line bundle $F\in \Pic(R)$ besides $L\in \Pic(R)$.
Here we simply let
\begin{equation}\label{E152}
F_i = F\bigg|_{R_i} = \frac{m}{2} G_i
\end{equation} 
with the ``tricky'' requirement that
\begin{equation}\label{E153}
\CO_D(G_1 - G_2)\in \Pic^0(D) = J(D) \text{ is $(m/2)$-torsion.}
\end{equation}
We glue $R_1$ and $R_2$ in such a way that \eqref{E050} are the only relations
between $\Pic(R_i)$, with $L_i$ and $F_i$ given by 
\eqref{E900}, \eqref{E440} and \eqref{E152}, respectively.
More precisely, the kernel $\Pic(R)$ of the map
\begin{equation}\label{E053}
\Pic(R_1)\oplus \Pic(R_2)\xrightarrow{e_1^* - e_2^*} \Pic(D) 
\end{equation}
is freely generated by $L = L_1 \oplus L_2$ and $F = F_1 \oplus F_2$. Numerically, we have
\begin{equation}\label{E054}
L_i^2 = g - 1, L_i F_i = \frac{m}{2} \text{ and } F_i^2 = 0.
\end{equation}

\begin{rem}\label{REM003}
It may appear that $F$ is not primitive by \eqref{E152}. It actually is
since $G_1 - G_2$ is a torsion point of $J(D)$ of order $m/2$ and hence there does not exist $k\in \BZ$ such
$\CO_D(kG_1) = \CO_D(kG_2)$ unless $(m/2)|k$. It may also appear that $h^0(F) = m/2+1$ by
\eqref{E152}. Actually, $h^0(F) = 2$, i.e., $|F|$ is a pencil, again by \eqref{E153}.
Indeed, a member of $|F|$ is a union $N_1\cup N_2\cup ... \cup N_m$ where
\begin{itemize}
\item
$N_k\subset R_1$ and $N_k\in |G_1|$ for $k$ odd and
$N_k\subset R_2$ and $N_k\in |G_2|$ for $k$ even;
\item $\cup N_k$ meets $D$ at points $q_1, q_2, ..., q_m$ such that
\begin{equation}\label{E055}
N_k\cdot D = q_k + q_{k+1}
\end{equation}
for $1\le k\le m$, where we let $q_{m+1} = q_1$.
\end{itemize}
Obviously, such a union $\cup N_k$ moves in a base point free pencil.
\end{rem}

Such $R$ can be deformed to $K3$ surfaces in $\CP_{g,m}$. That is, there exists a one-parameter
{family} $\CS$ over the disk $\Delta = \{|t| < 1\}$ and
two line bundles $L$ and $F\in \Pic(\CS/\Delta)$ such that
$(\CS_t, L)\in \CP_{g,m}$ for $t\ne 0$ and $\CS_0 = R$ is the union $R$ with $L$
and $F$ constructed as above. 
The proofs of \lemref{LEM003} and \propref{PROP002} both depend on the construction of certain
rational curves on the general fibers $\CS_t$. Our strategy is
to produce a reducible rational curve on the central fiber $R$, called a limiting rational curve
in \cite{C}, and show that it can be
deformed to an irreducible rational curve on the general fibers.

\begin{lem}\label{LEM004}
For all $g,m\in \BZ^+$ satisfying $g\ge 2$ and $2|m$ and
a general $(X,L)\in \CP_{g,m}$, there is an irreducible nodal rational curve in $|aL + bF|$
for all $a\in \BZ^+$ and $b\in \BZ$ satisfying
\begin{equation}\label{E063}
\max\left(2a \left\lfloor \frac{g-1}{2}\right\rfloor, a\right) + bm > 0
\end{equation}
and $b^2 + (g-2)^2 \ne 0$.
\end{lem}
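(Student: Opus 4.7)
The plan is to use the semistable degeneration $\mathcal{S} \to \Delta$ constructed in Section~\ref{SSECDEFORM}, whose general fiber lies in $\mathcal{P}_{g,m}$ and whose central fiber is $R = R_1 \cup R_2$ with $R_i$ a Hirzebruch surface ($\mathbb{F}_0$ for $g$ odd, $\mathbb{F}_1$ for $g$ even). The strategy, following \cite{CLM} and \cite{C}, is to build a suitable reducible ``limiting rational curve'' $C^{\#} = C_1 \cup C_2 \subset R$ in the class $aL + bF$ and then invoke the standard smoothing theorem for limiting rational curves to deform $C^{\#}$ to an irreducible nodal rational curve on a general fiber $\mathcal{S}_t$. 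The hypothesis $b^2 + (g-2)^2 \neq 0$ is there precisely to exclude the one case ($g = 2$, $b = 0$) in which this degeneration is not constructed here.

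The first step is to restrict the class to each component. Using \eqref{E900}, \eqref{E440}, and \eqref{E152}, one computes
\[
A_i := (aL + bF)\big|_{R_i} = aM_i + c_i G_i,
\]
with $c_i = a\tfrac{g-1}{2} + b\tfrac{m}{2}$ if $g$ is odd and $c_i = a\tfrac{g}{2} + b\tfrac{m}{2}$ if $g$ is even. A direct check shows that the numerical hypothesis
$\max(2a \lfloor (g-1)/2 \rfloor, a) + bm > 0$ is exactly the condition needed for $|A_i|$ to contain irreducible curves on $R_i$ (the asymmetric form of the bound reflects the presence of the $(-1)$-section $M_i$ on $\mathbb{F}_1$, which must not be forced as a fixed component).

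Next I would construct an irreducible nodal rational curve $C_i \in |A_i|$ on each $R_i$. The existence of such members on a Hirzebruch surface in a class $\alpha M + \beta G$ with $\alpha, \beta \geq 1$ is classical; moreover, the general such rational curve has freedom in how it meets $D$. Using this freedom together with the gluing relations \eqref{E050}--\eqref{E051}, I would arrange $C_1|_D = C_2|_D$ as an effective divisor on $D$ (possible since $A_1|_D = A_2|_D$ as classes) and further arrange that the dual graph of $C^{\#} = C_1 \cup C_2$, together with its marked nodes on $D$, satisfies the combinatorial hypothesis of the smoothing theorem of \cite{C}: namely, after partial normalization at the nodes that are to be smoothed, the resulting curve should be a disjoint union of $\mathbb{P}^1$'s whose combinatorial gluing graph is a tree. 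The parity $2 \mid m$ and the torsion condition \eqref{E153} built into the degeneration are precisely what make this compatible matching on $D$ possible.

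Finally, applying the smoothing theorem from \cite{CLM} and \cite{C} to $C^{\#}$ produces, for general $t \in \Delta$, an irreducible nodal rational curve in $|aL + bF|$ on $\mathcal{S}_t$, which is a general $(X,L) \in \mathcal{P}_{g,m}$, as required. The main obstacle is verifying the combinatorial matching in Step three: one must exhibit $C_1, C_2$ with coincident intersection divisors on $D$ and with the correct tree-like dual graph structure, all while controlling the geometric genus of each $C_i$. Once this matching is checked --- and it is exactly what the parity/torsion hypotheses of Section~\ref{SSECDEFORM} were set up to facilitate --- the existence and smoothability of $C^{\#}$ follow routinely from the cited deformation results.
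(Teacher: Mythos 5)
Your overall framework (degenerate to $R_1\cup R_2$, build a limiting rational curve, smooth it via \cite{CLM} and \cite{C}) matches the paper's, but the core of your construction --- taking each $C_i$ to be an \emph{irreducible} nodal rational curve in $|A_i|$ and gluing $C_1$ and $C_2$ along a common divisor on $D$ --- does not work, and this is precisely the difficulty the paper's elaborate chain construction is designed to circumvent. The problem is the genus count. A limit of irreducible rational curves on $\CS_t$ is a genus-$0$ stable map, so its domain is a tree of $\BP^1$'s; on the other hand, at any point of $D$ away from the sixteen rational double points $p_1,\dots,p_{16}$ the total space $\CS$ is locally $xy=t$, which forces every transverse node of $C_1\cup C_2$ on $D$ at such a point to be smoothed in the deformation, i.e.\ to be an actual node of the domain of the stable map. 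If $C_1$ and $C_2$ are irreducible and meet $D$ in $d=A_i\cdot D$ common points (and $d=2a+2c_i\gg 1$ for the classes at hand), the resulting dual graph has first Betti number $d-1>0$, so the deformed curve has geometric genus $d-1$, not $0$. You cannot ``choose which nodes to smooth'': only nodes located at the $p_j$ may persist, and with two irreducible components you cannot place $d-1$ of the matching points at the $p_j$'s. Your own tree criterion is therefore unachievable in your setup.

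The paper's proof instead takes each $C_i$ to be a long \emph{chain} of smooth rational curves in $|G_i|$, $|M_i|$ (or $|M_i+G_i|$), plus one component $\Gamma_i$ with a high-order tangency to $D$ at a single point $s$, with the two chains anchored at the rational double points $p_1,p_2$ (and $p_3,p_4$ when $g$ is even) so that the forced smoothings produce a tree. A second issue your sketch glosses over is condition \eqref{E069}: one must rule out proper subcurves of $C_1\cup C_2$ lying in some $|a'L+b'F|$, since otherwise the smoothing need not be irreducible; the paper arranges this using the general position of the $p_j$ and the torsion condition \eqref{E153}. (Two smaller points: your reading of \eqref{E063} as ``exactly the irreducibility condition for $|A_i|$'' is slightly off --- for $g$ even it gives $l>a$ rather than $l\ge a$, which is what the chain construction actually needs --- and the case $g=2$ with $b<0$ requires passing to the flopped family $\CS'$, which your argument does not address.)
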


\begin{proof}[Proof (when $g\ge 3$)]
Our construction of limiting rational curves $C_1\cup C_2$ with $C_i\subset R_i$ is very similar
to the construction in \cite{C}, but with some added difficulties. Namely, we have to make sure
that 
\begin{equation}\label{E069}
\begin{split}
&\text{there does not exist $C_1'\cup C_2' \subsetneq C_1\cup C_2$}\\
&\quad \text{such that $C_1'\cup C_2'\in
|a'C + b'F|$ for some $a', b'\in \BZ$;}
\end{split}
\end{equation}
otherwise, a deformation of $C_1\cup C_2$
onto a general fiber $\CS_t$ is not necessarily irreducible. This is a little trickier here
due to the fact $\rank\Pic(R) = 2$ and the condition \eqref{E153}.

The one-parameter family $\CS$ has sixteen rational double points $p_1, p_2, ..., p_{16}$ lying on
$D$, which are precisely the zeros of a section in $H^0(T^1(R))$ that is in turn the image of
the Kodaira-Spencer class of $\CS/\Delta$ under the map \eqref{E040}. So these sixteen points
satisfy
\begin{equation}\label{E056}
\begin{split}
\CO_D(p_1+p_2+...+p_{16}) &= \CO_D(-K_{R_1} - K_{R_2})\\
& =
\begin{cases}
\CO_D(2M_1 + 2G_1 + 2M_2 + 2G_2) & \text{if } 2\nmid g\\
\CO_D(2M_1 + 3G_1 + 2M_2 + 3G_2) & \text{if } 2\mid g
\end{cases}
\end{split}
\end{equation}
and this is the only relation among $p_1, p_2, ..., p_{16}$ for a general choice of $\CS$.

We write
\begin{equation}\label{E057}
(aL + bF)\bigg|_{R_i} = a M_i + \left(a\left\lfloor \frac{g}{2}\right\rfloor + \frac{bm}{2}\right) G_i =
a M_i + l G_i.
\end{equation}

\medskip

\noindent{\bf Case $2\nmid g$ and $a \le l$.}
We let
\begin{equation}\label{E058}
C_i = I_{i1}\cup I_{i2}\cup ... \cup I_{i,a-1}\cup J_{i1}\cup J_{i2}\cup ...\cup J_{i,a-1}
\cup \Gamma_i
\end{equation}
be the curve on $R_i$ ($i=1,2$) with irreducible components
$I_{ij}\in |G_i|$, $J_{ij}\in |M_i|$ and $\Gamma_i\in |M_i + (l-a+1) G_i|$ given by
\begin{equation}\label{E059}
\begin{split}
I_{11} \cdot D &= p_1 + q_1,\ J_{21}\cdot D = q_1 + q_2\\
I_{12}\cdot D &= q_2 + q_3,\ J_{22}\cdot D = q_3 + q_4\\
&\ldots\\
I_{1,a-1}\cdot D &= q_{2a-4} + q_{2a-3},\ J_{2,a-1}\cdot D = q_{2a-3} + q_{2a-2}
\end{split}
\end{equation}
\begin{equation}\label{E060}
\begin{split}
I_{21} \cdot D &= p_2 + r_1,\ J_{11}\cdot D = r_1 + r_2\\
I_{22}\cdot D &= r_2 + r_3,\ J_{12}\cdot D = r_3 + r_4\\
&\ldots\\
I_{2,a-1}\cdot D &= r_{2a-4} + r_{2a-3},\ J_{1,a-1}\cdot D = r_{2a-3} + r_{2a-2}
\end{split}
\end{equation}
and
\begin{equation}\label{E061}
\Gamma_1\cdot D = p_2 + q_{2a -2} + (2l-2a + 2) s, \ \Gamma_2\cdot D = p_1 + r_{2a -2} + (2l-2a + 2) s
\end{equation}
where $q_j, r_j$ and $s$ are points on $D$ and we let $q_0 = p_1$ and $r_0 = p_2$. Intuitively, $C_1\cup C_2$ is the union of
two chains of curves, one starting at $p_1$ and the other starting at $p_2$,
consisting of curves in $|G_i|$ and $|M_i|$ alternatively and finally ``joined'' by $\Gamma_1$
and $\Gamma_2$. We see that \eqref{E069} holds because
$p_1$ and $p_2$ are two general points on $D$ and $G_i$ and $M_{3-i}$ are linearly independent in $\Pic_\BQ(D)$ for each $i=1,2$.

\medskip

\noindent{\bf Case $2\nmid g$ and $a > l$.}
Note that \eqref{E063} implies $l > 0$ when $g$ is odd.
We use the same construction as above for $a \le l$ by
simply switching $G_i$ and $M_i$ and switching $a$ and $l$.

\medskip

\noindent{\bf Case $2|g$.}
Note that $l > a$ by \eqref{E063} when $g$ is even. Let
\begin{equation}\label{E062}
\alpha = \left\lfloor \frac{a}{2}\right\rfloor \text{ and }
\beta = \left\lfloor \frac{a-1}{2}\right \rfloor 
\end{equation}
and let
\begin{equation}\label{E064}
C_i = I_{i1}\cup I_{i2}\cup ...\cup I_{i\alpha} \cup J_{i1}\cup J_{i2}\cup ...\cup J_{i\beta} \cup \Gamma_i
\end{equation}
be the curve on $R_i$ ($i=1,2$) with irreducible components $I_{ij}, J_{ik}\in |M_i+G_i|$ and $\Gamma_i\in |M_i+ (l-a+1) G_i|$
given by 
\begin{equation}\label{E065}
\begin{split}
I_{11}\cdot D &= p_1 + p_3 + q_1,\ I_{21} \cdot D = p_2 + q_1 + q_2\\
I_{12}\cdot D & = p_1 + q_2 + q_3,\ I_{22} \cdot D = p_2 + q_3 + q_4\\
& \ldots\\
I_{1\alpha}\cdot D &= p_1 + q_{2\alpha -2} + q_{2\alpha -1},\ I_{2\alpha} \cdot D = p_2 + q_{2\alpha-1} + q_{2\alpha},
\end{split}
\end{equation}
\begin{equation}\label{E066}
\begin{split}
J_{21}\cdot D &= p_1 + p_4 + r_1,\ J_{11} \cdot D = p_2 + r_1 + r_2\\
J_{22}\cdot D &= p_1 + r_2 + r_3,\ J_{12}\cdot D = p_2 + r_3 + r_4\\
& \ldots\\
J_{2\beta}\cdot D &= p_1 + r_{2\beta -2} + r_{2\beta -1},\ J_{1\beta} \cdot D = p_2 + r_{2\beta-1} + r_{2\beta},
\end{split}
\end{equation}
and
\begin{equation}\label{E067}
\begin{split}
\Gamma_1\cdot D &= p_4 + q_{2\alpha} + (\alpha - \beta) p_2 + (2l - 2a -\alpha + \beta + 1) s,\\
\Gamma_2\cdot D &= p_3 + r_{2\beta} + (\alpha - \beta) p_1 + (2l - 2a -\alpha + \beta + 1) s
\end{split}
\end{equation}
where $q_j$, $r_k$ and $s$ are points on $D$ and we let $q_0 = p_3$ and $r_0 = p_4$.
Since $p_1, p_2, p_3, p_4$ are in general position on $D$, it is not hard to see that
\eqref{E069} holds.

\medskip

The curve $C_1\cup C_2$ constructed above has the following properties in addition to
\eqref{E069}:
\begin{itemize}
\item every component of $C_i$ is a smooth rational curve
and $C_i$ has simple normal crossing outside of $D$;
\item if $C_i$ and $D$ meet at a point $q\not\in \{p_1,p_2, ...,p_{16}\}$, there is only one branch
of $C_i$ locally at $q$, i.e., $C_i$ is smooth at $q$;
\item if $C_i$ and $D$ meet at a point $q\in \{p_1,p_2, ...,p_{16}\}$, all local branches of $C_i$
at $q$ meet transversely with each other and also transversely with $D$.
\end{itemize}
Then by the argument in \cite{C}, more specifically, {by \cite[Theorem 2.1 and 2.2]{C}, we can
show that }
$C_1\cup C_2$ can be deformed to an irreducible nodal rational
curve on the general fibers of $\CS/\Delta$. More precisely, there exists a flat family of curves
$\CC\subset \CS$, after a base change, such that $\CC_0 = C_1\cup C_2$ and $\CC_t$ is an irreducible
rational curve with only ordinary double points as singularities for $t\ne 0$.
\end{proof}

\begin{rem}\label{REM004}
The condition \eqref{E063} is trivially satisfied when we take $a >> |b|$. Therefore, for every 
$C\not\sim_\text{rat} lL$, there is an irreducible nodal rational curve in
$|nL - C|$ for $n$ sufficiently large. This is what we need for \propref{PROP002}.
\end{rem}

Now we are ready to prove \lemref{LEM003}.

\begin{proof}[Proof of \lemref{LEM003} (when $g \ge 3$)]
By \lemref{LEM004}, there is an irreducible component of $\CC_{g,m,A}$ dominating
$\BP^1\times\CP_{g,m}$, by setting e.g. $A = 2L + F$.

Let $\CS/\Delta$ be the family of $K3$ surfaces constructed above. One may think of $\CS$ as the
pullback of $\CS_{g,m}$ under a map $\Delta^* \to \CP_{g,m}$.
Let $\CC\subset \CS$ be a family of rational curves constructed in the proof of \lemref{LEM004}
with $\CC_t\in |2L + F|$. One may think of $\CC$ as an irreducible component of the pullback of
$\CC_{g,m,2L+F}$ to $\CS$. Correspondingly, we pull back the map $\gamma$ to $\CC$, i.e.,
\begin{equation}\label{E070}
\gamma: \CC\to \overline{\m}_{1,2}
\end{equation}
sending $p\in \CC_t$ to $(J(E_p), 0, L - mp)$, where $E_p$ is the fiber of
the projection $\pi: \CS\to \BP^1\times \Delta$ over the point $\pi(p)$.

It is enough to prove that 
\begin{equation}\label{E071}
\dim(\gamma(\CC)\cap \overline{\m}_{0,4}) = 1
\end{equation}
where we think of $\overline{\m}_{0,4}$ as a component of
$\overline{\m}_{1,2} \backslash \m_{1,2}$. Instead of directly studying $\gamma$, which roughly
maps $p$ to $L - mp$, we look at the map sending $p$ to $p - p'$, where $p'\ne p$
is another point of intersection between $E_p$ and $\CC_t$.

More precisely, we let $\T$ be the product $\CC\times_{\BP^1\times\Delta} \CC$ with diagonal removed.
We have a well-defined map
\begin{equation}\label{E072}
\xi: \T\to \overline{\m}_{1,2}
\end{equation}
sending $(p,p')\in \T$ to $(J(E_p), 0, p - p')$. 
Clearly, $\gamma$ is dominant if $\xi$ is; $\xi$ is dominant if
\begin{equation}\label{E073}
\dim(\xi(\T)\cap \overline{\m}_{0,4}) = 1.
\end{equation}

As $t\to 0$,
the fibers $E_p\in |F|$ of $\pi: \CS_t\to \BP^1$ will degenerate to a curve $N\in |F|$ on the central fiber
$\CS_0 = R_1\cup R_2$ as described in \remref{REM003}. That is, $N$ is a union
$N_1\cup N_2\cup ... \cup N_m$ given by \eqref{E055}.
For $N$ a general member of the pencil $|F|$, $N_1$ meets $C_1$ transversely at two distinct points
$p\ne p'\not\in D$, where $\CC_0 = C_1\cup C_2$ is the limiting rational curve constructed in the
proof of \lemref{LEM004}. Clearly, $(p, p')\in \T$. It is not hard to see that $\xi$ simply sends
$(p, p')$ to $(p, p', q, q')\in \overline{\m}_{0,4}$ as four points on $N_1\isom \BP^1$, where
$N_1\cap D = \{q, q'\}$. To show \eqref{E073}, it suffices to show that the moduli of
$(p, p', q, q')$ varies when $N$ moves in the pencil $|F|$. From the construction of
$C_1\cup C_2$, we see that $C_1$ has a node $r\not\in D$. For $N$ a general member of $|F|$,
$(p, p', q, q')$ are four distinct points on $N_1$. When $N_1$ passes through $r$, we have
$p = p' = r$ while $r\ne q$ and $r\ne q'$. So the moduli of $(p, p', q, q')$ changes as $N$ varies.
We are done.
\end{proof}

\subsection{Proof of \propref{PROP002}}

By \lemref{LEM004}, we can find an irreducible nodal rational curve $B\in |nL - C|$
for $n$ sufficiently large. It also follows that there is an irreducible nodal elliptic curve
$B\in |nL - C|$.
It remains to show that we can deform $B\cup C$ to a rational curve
if $g(B) = 0$ or an elliptic curve if $g(B) = 1$
on a general $K3$ surface.

Let us do the case $g(B) = 0$. 
We fix an intersection $p\in B\cap C$. Let $B^\nu$ and $C^\nu$
be the normalizations of $B$ and $C$, respectively, and let  
\begin{equation}\label{E080}
\eta: B^\nu\vee_p C^\nu \to B\cup C \subset X
\end{equation}
be a partial normalization of $B\cup C$, where $B^\nu\vee_p C^\nu$ is the union of $B^\nu$
and $C^\nu$ meeting transversely at a single point over $p$. We want to show that the stable
map $\eta: B^\nu\vee_p C^\nu \to X$ can be deformed when we deform $X$. This can be done by
computing the virtual dimension of the moduli space of stable maps to $K3$ surfaces. However,
it cannot be done in a naive way for the following well-known reason:
the virtual dimension of $\m_{nL,0}^X$ is
\begin{equation}\label{E081}
c_1(T_X)\cdot nL + \dim X - 3 = -1
\end{equation}
which is not the ``expected'' dimension $0$, where $\m_{\gamma,g}^X$ is the moduli space of
stable maps $\eta: A\to X$ of genus $g$ with $[\eta_* A ] = \gamma\in H_2(X,\BZ)$. 
The remedy for this situation is to replace $X$ by a
so-called ``twisted'' family of $K3$ surfaces, i.e., a complex deformation of $X$. This way
we have the right dimension and the stable maps $\eta\in \m_{nL,0}^X$ only deform onto
projective $K3$ surfaces.

Let $\CS/\Delta^2$ be a family of complex $K3$ surfaces over the 2-disk
$\Delta^2$ with $\CS_0 = X$ and the class $L\in H_2(\CS,\BZ)$. Then $\eta\in \m_{nL,0}^\CS$ and
the virtual dimension of $\m_{nL,0}^\CS$ is
\begin{equation}\label{E082}
c_1(T_\CS)\cdot nL + \dim \CS - 3 = 1.
\end{equation}
Let $V$ be an irreducible component of $\m_{nL,0}^\CS$ containing $\eta$.
Then $\dim V \ge 1$. Let
\begin{equation}\label{E083}
W = \{ t\in \Delta^2: L\in \Pic(\CS_t) \}
\end{equation}
be the subvariety of $\Delta^2$ parameterizing projective $K3$ surfaces polarized by $L$.
Obviously, $\dim W = 1$ and $\Pic(\CS_t) = \BZ$ for $t\ne 0\in W$ and a general choice of
$\CS$. Clearly, $V$ maps to $W$ under the projection $\CS\to \Delta^2$ and
it is obviously flat over $W$ since $\dim V_0 = 0$ and $\dim V \ge \dim W$.
Hence $V_t\ne\emptyset$ for $t\ne 0 \in W$.

\begin{rem}\label{REM006}
The above argument on the deformation of reduced stable maps to $K3$ surfaces is well known among the experts.
The first author learned it from Jun Li. Please also see \cite{B-L} and \cite[Sec. 2.2]{M-P}.
\end{rem}

The case $g(B) = 1$ follows from the same argument.
This finishes the proof of \propref{PROP002}
and hence \thmref{THM001} follows. We need to say a few things more for \thmref{THM002}.

By deformation theory, $B$ moves in a one-parameter family when $g(B) = 1$.
A general member of this family is an irreducible nodal elliptic curve
meeting $C$ transversely. In addition, the intersection $p\in B\cap C$ moves on $C$
when $B$ varies in the family. 
Now we let $\CS$ be a projective family of $K3$ surfaces 
polarized by $L$ over $\Delta$ with $\CS_0 = X$.
We can deform $B\cup C$ to an irreducible elliptic curve
on a general fiber $\CS_t$ of the family $\CS/\Delta$ by the argument above.
Namely, there exists a family of curves
$\CC\subset \CS$, after a base change, such that $\CC_0 = B\cup C$ and $g(\CC_t) = 1$ for $t\ne 0$.
Let $\nu: \CC^\nu\to \CC$ be the normalization of $\CC$. Then $\CC_0^\nu$ is the union
$B^\nu\vee_p C^\nu$ described above.
We lift $\nu: \CC^\nu \to \CS$ to $d\nu: \CC^\nu \dashrightarrow \BP T_{\CS/\Delta}$.
Let $\mu: \wt{\CC} \to \BP T_{\CS/\Delta}$ be the stable reduction of the map $d\nu$ and let
$\wt{B}$ and $\wt{C}\subset \wt{\CC}_0$ be the proper transforms of $B$ and $C$, respectively.

Since $B$ and $C$ meet transversely at $p$, the images of the tangent spaces $T_{B,p}$ and
$T_{C,p}$ in $T_{X,p}$ differ. Consequently, $\mu(\wt{B})$ and $\mu(\wt{C})$ meet
$\BP T_{X,p}$ at two distinct points, where $\BP T_{X,p}\isom \BP^1$
is the fiber of $\BP T_{\CS/\Delta}/\CS$ over the point $p\in \CS$. Therefore, $\wt{B}$ and $\wt{C}$
are disjoint on $\wt{\CC}_0$ and they must be joined by a tree of rational curves that dominates
$\BP T_{X,p}$. That is, $\BP T_{X,p} \subset \mu(\wt{C})$ and hence
$\BP T_{X,p}\subset \varphi(\E_{g,n})$. As $p$ moves on $C$, we see that
\begin{equation}\label{E100}
\bigcup_{p\in C} \BP T_{X,p} \subset \overline{\varphi(\E_{g,n})}.
\end{equation}
We take $C$ to be a member of a sequence of rational curves which are dense on $X$. Hence
\begin{equation}\label{E101}
\BP T_X \subset \bigcup_{n=1}^\infty \overline{\varphi(\E_{g,n})}
\end{equation}
and \thmref{THM002} follows.

\subsection{The case $g = 2$}\label{SSECG2}

A $K3$ surface in $\CP_{2,m}$ can still be degenerated to a union $R_1\cup R_2$ with $R_i\isom \BF_1$ and
$L_i$ and $F_i$ given by \eqref{E440}, \eqref{E152} and \eqref{E153}, just as in the case that $g\ge 4$
is even. Let $\CS/\Delta$ be the corresponding family of $K3$ surfaces with $\CS_0 = R_1\cup R_2$ and
$(\CS_t, L)\in \CP_{2,m}$. Such $\CS$ is projective over $\Delta$ since $L + nF$ is relatively ample
over $\Delta$ for all $n > 0$. However, $L$ is big and nef but not ample over $\Delta$ itself. Indeed,
the birational
map $\psi: \CS\to \QQ$ given by $|nL|$ for $n\ge 2$ contracts the two exceptional curves $M_i$.
The 3-fold $\QQ$ is a family of $K3$ surfaces in $\CP_{2,m}$ over $\Delta$ whose central fiber
$\QQ_0 = S_1\cup S_2$ is a union of $S_i\isom \PP^2$ meeting transversely along an elliptic curve
$D = S_1\cap S_2$. Here we use the same notation $D$ for both intersections $R_1\cap R_2$ and
$S_1\cap S_2$.

The two curves $M_i$ are contracted by $\psi$ to two rational double points $p_{17}$ and $p_{18}$
of $\QQ$ on $D = S_1\cap S_2$. Indeed, $\QQ$ has eighteen rational double points $p_1, p_2, ..., p_{16},
p_{17}, p_{18}$ on $D$ by deformation theory, where $p_1, p_2, ..., p_{16}$ are the images of the rational double points of $\CS$ under $\psi$. Again we use the same notations $p_1, p_2, ..., p_{16}$ for both
the rational double points of $\CS$ and their images under $\psi$.

One subtle point is that $M_i$ are contracted to the same point $p_{17} = p_{18}$ where $\QQ$ has a
singularity of the type $xy = tz^2$ when $m = 2$. Such a singularity can be analyzed in the same way as
rational double points. Basically, we have two rational double points ``collide'' in this special case.
However, we can save ourselves some trouble in dealing with this ``corner'' case by simply assuming 
that $m\ge 4$ since
\begin{equation}\label{E400}
\bigcup_{\genfrac{}{}{0pt}{}{2|m}{m \ge m_0}} \CP_{g,m}
\end{equation}
is obviously dense in $\K_g$ for all $m_0$.
For our purpose, we may simply assume $m$ to be sufficiently large. So $p_{17}$ and $p_{18}$ are two
distinct points on $D$ and $R_i$ is the blowup of $S_i$ at $p_{16+i}$ for $i=1,2$, respectively. And
$\psi: \CS\to \QQ$ is a small resolution of $\QQ$ at $p_{17}$ and $p_{18}$. It is well known that there
are flops of $\CS$ with respect to $M_i$. Namely, we have the diagram
\begin{equation}\label{E401}
\xymatrix{
\CS \ar@{-->}[rr]\ar[dr]_\psi & & \CS'\ar[dl]^{\psi'}\\
& \QQ
}
\end{equation}
where $\CS'$ is the 3-fold obtained from $\CS$ by flops with respect to $M_1$ and $M_2$. That is,
the central fiber $\CS_0' = R_1' \cup R_2'$ of $\CS'$ is a union of $R_i'\isom \BF_1$ with $R_i'$ the
blowup of $S_i$ at $p_{19-i}$ for $i = 1,2$.

Let $L'$ and $F'\in \Pic(\CS'/\Delta)$ be the proper transforms of $L$ and $F$, respectively, and let
$M_i'$ and $G_i'$ be the generators of $\Pic(R_i')$ given in the same way as \eqref{E151}. It is not hard
to see that
\begin{equation}\label{E402}
L_i' = L'\bigg|_{R_i'} = M_i' + G_i'\text{ and } F_i' = F'\bigg|_{R_i'} = m M_i' + \frac{m}{2} G_i'.
\end{equation}
So we can work with either $\CS$ or $\CS'$ to produce rational curves in $|a L + b F|$ on $\CS_t$ or equivalently
$|aL' + bF'|$ on $\CS_t'$, depending on the sign of $b$.

\begin{proof}[Proof of \lemref{LEM004} when $g = 2$]
When $b > 0$, we have
\begin{equation}\label{E403}
(a L + b F)\bigg|_{R_i} = a M_i + \left(a + \frac{bm}{2}\right) G_i
\end{equation}
with $a + bm/2 > a$. Hence we may use the same construction of limiting rational curves $C_1\cup C_2$ as in
the case of $g$ being even and $g\ge 4$.

When $b < 0$, we have
\begin{equation}\label{E404}
(a L' + b F')\bigg|_{R_i'} = (a + bm) M_i' + \left(a + \frac{bm}{2}\right) G_i'
\end{equation}
where $a + bm > 0$ by \eqref{E063} and $a + bm/2 > a + bm$. So we may use the same construction again by working with
$\CS'$.
\end{proof}

The proof of \lemref{LEM003} goes through without any change since we are using the limiting rational curves
in $|2L+F|$ for which no flops are needed.

\section{Normal Functions Associated to Elliptic Fibrations}\label{SEC003}

Here we will give another proof of \propref{PROP001} via the theory of normal functions.
Roughly, we will show that if $L - mp$ fails to generate a dense subgroup of $J(X_q)$ for a general point
$q\in \PP^1$, then it has to be torsion for all $q$. The advantage of this approach is that it does not
seem to depend on the general moduli of $X$, although we do need the fact, which we will prove by degeneration,
that the rational curve $C\subset X$ we start with meets the singular fibers of $X/\PP^1$ transversely.

Given an elliptic surface $X_{\Gamma}\to \Gamma$, we
let $\Sigma \subset \Gamma$ correspond to the singular fibers of 
$\rho_{\Gamma}:X_{\Gamma}\to \Gamma$, with inclusion $j : U:= \Gamma\bs \Sigma \hookrightarrow
\Gamma$. So we have a diagram:
\[
\begin{matrix}
X_U&\hookrightarrow&X_{\Gamma}\\
&\\
\rho_U\biggl\downarrow\quad&&\quad\biggr\downarrow\rho_{\Gamma}\\
&\\
U&{\buildrel j\over\hookrightarrow}&\Gamma,
\end{matrix}
\]
where $\rho_U$ is smooth and proper.  {We consider the $i$-th Leray 
direct image sheaves $R^i\rho_{U,*}\C$ and $R^i\rho_{\Gamma,*}\C$. 
The sheaf of invariant ``cycles'', i.e. those
$i$-th cohomology classes in the fibers of $\rho_U$ that are invariant under local monodromy,
is given by $ j_{\ast}R^i\rho_{U,\ast}\C$.}
The local invariant cycle property (see \cite{Z2}, \S15) gives us a surjection:
\[
R^i\rho_{\Gamma,\ast}\C \to j_{\ast}R^i\rho_{U,\ast}\C,
\]
for all $i$ and hence
\[
H^1(\Gamma,R^1\rho_{\Gamma,\ast}\C) \simeq H^1(\Gamma,j_{\ast}R^1\rho_{U,\ast}\C).
\]
The Leray spectral sequence for $\rho_{\Gamma}$ degenerates at $E_2$ (see \cite{Z2}, \S15).
This is induced by a Leray filtration: $H^2(X_{\Gamma},\Q) = $
\[
L^0H^2(X,\Q) \supset L^1H^2(X_{\Gamma},\Q) 
\supset L^2H^2(X_{\Gamma},\Q) \supset L^3H^2(X_{\Gamma},\Q) = 0.
\]
Let $Gr_L^i H^2(X_{\Gamma},\Q) = L^iH^2(X_{\Gamma},\Q)/L^{i+1}H^2(X_{\Gamma},\Q)$. Note that 
\[
Gr_L^2H^2(X_{\Gamma},\Q) = L^2H^2(X_{\Gamma},\Q) =
H^2(\Gamma,R^0\rho_{\Gamma,\ast}\Q) = \Q[F] \simeq \Q,
\]
where  we use the fact that $R^0\rho_{\Gamma,\ast}\Q \simeq \Q$ is the constant
sheaf. Further,
\[
Gr_L^1H^2(X_{\Gamma},\Q) = 
H^1(\Gamma,R^1\rho_{\Gamma,\ast}\Q) \simeq H^1(\Gamma,j_{\ast}R^1\rho_{U,\ast}\Q),
\]
 and the kernel  of the surjective map
\[
H^2(X_{\Gamma},\Q) \twoheadrightarrow Gr_L^0H^2(X_{\Gamma},\Q )= H^0(\Gamma,R^2\rho_{\Gamma,\ast}\Q),
\]
defines $L^1H^2(X_{\Gamma},\Q)$.
There are short exact sequences:
\[
0 \to  \Q[F]  \to L^1H^2(X_{\Gamma},\Q) \to H^1(\Gamma,j_{\ast}R^1\rho_{U,\ast}\Q)\to 0,
\]
\[
0\to H^1(\Gamma,j_{\ast}R^1\rho_{U,\ast}\Q)
 \to \frac{H^2(X_{\Gamma},\Q)}{\Q[F]} \to H^0(\Gamma,R^2\rho_{\Gamma,\ast}\Q)
\to 0.
\]
%H^2(\rho_{\Gamma}^{-1}(\Delta),\Q) \simeq H_2^{BM}(\rho_{\Gamma}^{-1}(\Delta),\Q)
%\xrightarrow{\rho_{\Gamma,\ast}} H^{BM}_2(\Delta,\Q) \simeq H^0(\Delta,\Q). Note
%that \rho_{\Gamma} is proper! 
There is a commutative diagram
\begin{equation}\label{E27}
\begin{matrix}
0&\to& H^1(\Gamma,j_{\ast}R^1\rho_{U,\ast}\Q)&
& \to& \frac{H^2(X_{\Gamma},\Q)}{\Q[F]}& \to& H^0(\Gamma,R^2\rho_{\Gamma,\ast}\Q)
&\to& 0\\
&\\
&&&&&\rho_{\Gamma,\ast}\biggl\downarrow\quad&&\quad\biggr\downarrow
\ol{\rho}_{\Gamma,\ast}\\
&\\
&&&&&H^0(\Gamma,\Q)&=&H^0(\Gamma,\Q),
\end{matrix}
%\xymatrix{
%0\ar[r] & H^1(\Gamma,j_{\ast}R^1\rho_{U,\ast}\Q) \ar[r] 
%& \frac{H^2(X_{\Gamma},\Q)}{\Q[F]}\ar[r]
%\ar[d]_{\rho_{\Gamma,\ast}} &  H^0(\Gamma,R^2\rho_{\Gamma,\ast}\Q)
%\ar[r] \ar[d]^{\ol{\rho}_{\Gamma,\ast}} & 0\\
%&       &   H^0(\Gamma,\Q) \ar@{=}[r] & H^0(\Gamma,\Q),
%}
\end{equation}
where $\ol{\rho}_{\Gamma,\ast}$ is induced from ${\rho}_{\Gamma,\ast}$.
Note that $\ker\big(\ol{\rho}_{\Gamma,\ast}\big)$ will involve the components
of the bad fibers of $\rho_{\Gamma}$. For instance, if all the fibers of $\rho_{\Gamma}$
are irreducible, then $\ol{\rho}_{\Gamma,*}$ is an isomorphism.
Let $F_t := \rho_{\Gamma}^{-1}(t)$. There are holomorphic vector bundles over $U$:
\[
\F^1 := \CO_{U}\biggl(\coprod_{t\in U}H^{1,0}(F_t,\C)\biggr) \subset \F
:= \CO_{U}\biggl(\coprod_{t\in U}H^{1}(F_t,\C)\biggr),
\]
\[
\quad \F^{1,\ast} :=  \F/\F^1 = \CO_{U}\biggl(\coprod_{t\in U}H^{0,1}(F_t,\C)\biggr), 
\]
with canonical extensions
\[
\ol{\F}^1 \subset \ol{\F}, \quad \ol{\F}^{1,\ast} := \ol{\F}/\ol{\F}^1,
\]
over $\Gamma$ (see \cite{Z2}, \S3), as well as a  short exact sequences of sheaves:
\[
0 \to R^1\rho_{U,\ast}\Z \to {\F}^{1,\ast}\to {\J}\to 0\quad {\rm (over} \ U {\rm )},
\]
\[
0 \to j_{\ast}R^1\rho_{U,\ast}\Z \to \ol{\F}^{1,\ast}\to \ol{\J}\to 0 \quad {\rm (over} \ \Gamma {\rm )},
\]
where $\J,\ \ol{\J}$ are the sheaves of germs of normal functions over $U$ and $\Gamma$
respectively. Apart from playing a role in the limiting behavior
of normal functions about  {the} singular points $\Gamma\bs U$, the canonical extensions are useful
in calculating the Hodge filtration 
$\big\{F^{\ell}H^i(\Gamma,j_{\ast}R^1\rho_{\Gamma,\ast}\C)\big\}_{\ell\geq 0}$
on $H^i(\Gamma,j_{\ast}R^1\rho_{\Gamma,\ast}\C)$.
More specifically, from the work of \cite{Z2},
$H^i(\Gamma,j_{\ast}R^1\rho_{\Gamma,\ast}\Z)$ is naturally endowed with a pure
Hodge structure of weight $i+1$; moreover 
from (\cite{Z2}, \S9), one has isomorphisms:
\[
H^1(\Gamma,\ol{\F}^{1,\ast}) \simeq \frac{H^1(\Gamma,j_{\ast}R^1\rho_{U,\ast}\C)}{
F^1H^1(\Gamma,j_{\ast}R^1\rho_{U,\ast}\C)},
\]
\[
H^0(\Gamma,\ol{\F}^{1,\ast}) \simeq \frac{H^0(\Gamma,R^1\rho_{\Gamma,\ast}\C)}{
F^1H^0(\Gamma,R^1\rho_{\Gamma,\ast}\C)}.
\]
(It is worthwhile pointing out
that outside of cases of trivial $j$-invariant,  one has
$H^0(\Gamma,R^1\rho_{\Gamma,\ast}\C) = 0$ (see \cite{C-Z}, p. 5).)
Taking cohomology, one has a short exact sequence:
\[
0\to \frac{H^0(\Gamma,\ol{\F}^{1,\ast})}{ H^0(\Gamma,j_{\ast}R^1\rho_{U,\ast}\Z)} \to
H^0(\Gamma,\ol{\J}) \xrightarrow{\delta} H^1(\Gamma,j_{\ast}R^1\rho_{U,\ast}\Z)^{1,1} \to 0,
\]
where $H^1(\Gamma,j_{\ast}R^1\rho_{U,\ast}\Z)^{1,1} :=$
\[
 \ker \big(H^1(\Gamma,j_{\ast}R^1\rho_{U,\ast}\Z) 
\to H^1(\Gamma,j_{\ast}R^1\rho_{U,\ast}\C)\big/F^1H^1(\Gamma,j_{\ast}R^1\rho_{U,\ast}\C)\big).
\]
{The term $H^1(\Gamma,j_{\ast}R^1\rho_{U,\ast}\Z)^{1,1} $ admits
the following interpretation \footnote{At least in the situation of  the setting
of \cite{T}, where there are no exceptional curves of the first kind in the fibers of $\rho_{\Gamma}$,
$X_{\Gamma}$ admits a section, and nonconstant $j$-invariant.}:}  If we tensor $H^1(\Gamma,j_{\ast}R^1\rho_{U,\ast}\Z)^{1,1}$
with $\Q$, apply diagram (\ref{E27}), (which is well-known to be a diagram of Hodge structures),
{and restrict to algebraic cocycles,}
we arrive at the short exact sequence:
\begin{equation}\label{E27.7}
0 \to H^1(\Gamma,j_{\ast}R^1\rho_{U,\ast}\Q)^{1,1} \to 
\frac{H^{2}_{\rm alg}(X_{\Gamma},\Q)}{\Q[F]} \to H^0(\Gamma,R^2\rho_{\Gamma,\ast}\Q)
\to 0,
\end{equation}
where $H^2_{\rm alg}(X,\Q)\subset H^2(X,\Q)$ is the subspace of algebraic
cocycles.  {The intersection pairing involving the components of the bad
fibers of $\rho_{\Gamma}$ is well understood in terms of a negative definite property (see Lemma 1.3 of \cite{T}).
In particular  from (\ref{E27.7}) one can argue that
$H^1(\Gamma,j_{\ast}R^1\rho_{U,\ast}\Q)^{1,1}$ can be identified with
the quotient of the N\'eron-Severi group of $X$ (over $\Q$) by the subgroup generated by 
the irreducible components of the bad  fibers of $\rho_{\Gamma}$.}
The group $H^0(\Gamma,\ol{\J})$ is called the group of normal functions, and for
$\nu\in H^0(\Gamma,\ol{\J})$, $\delta(\nu) \in H^1(\Gamma,j_{\ast}R^1\rho_{U,\ast}\Z)^{1,1}$
is called its topological invariant.  We say that $\delta(\nu)$ is nontorsion if
$\delta(\nu) \ne 0$ as a class in $H^1(\Gamma,j_{\ast}R^1\rho_{U,\ast}\Q)^{1,1}$.\footnote{{Taking
into account Remark 2.4 of  \cite{T}  (and again assuming that
$X_{\Gamma}$ satisfy the assumptions stated in the previous footnote),  $H^1(\Gamma,j_{\ast}R^1\rho_{U,\ast}\Z)$ is
torsion free, hence the same holds for $H^1(\Gamma,j_{\ast}R^1\rho_{U,\ast}\Z)^{1,1}
= H^1(\Gamma,j_{\ast}R^1\rho_{U,\ast}\Z) \cap F^1H^1(\Gamma,j_{\ast}R^1\rho_{U,\ast}\C)$.}}
We need the following key observation:

\begin{prop}\label{P278} Suppose
that $\nu \in H^0(\Gamma,\ol{\J})$ is given such that $\delta(\nu)$ is nontorsion. 
Then for sufficiently general $t\in U$, the cyclic group generated
by $\nu(t)$ is dense in $J^1(E_t)$.
\end{prop}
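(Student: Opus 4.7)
The plan is to argue the contrapositive: if $\Z\nu(t)$ is not dense in $J^1(E_t)$ for Baire generic $t \in U$, then $\delta(\nu)$ is torsion. The key input is \lemref{LEM002} applied fiberwise. Trivializing $R^1\rho_{U,*}\Z$ over a simply connected open $V \subset U$, write $\nu(t) = \alpha_1(t)\omega_1 + \alpha_2(t)\omega_2$ modulo the lattice with $\alpha_i(t)$ real analytic in $t$; nondensity of $\Z\nu(t)$ amounts to the existence of a primitive $(a,b)\in\Z^2$ and a rational $q$ satisfying $a\alpha_1(t) + b\alpha_2(t) \equiv q \pmod{\Z}$.

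First I would globalize the rational linear relation via a Baire category argument. For each $(a,b,q,n) \in \Z^4$ with $(a,b)$ primitive, the locus $\{t \in V: a\alpha_1(t) + b\alpha_2(t) = q + n\}$ is a real analytic subset of $V$, so either equals $V$ or is nowhere dense. Since the nondensity locus is the countable union of these sets and is assumed Baire generic, one such analytic subset must equal $V$. Hence there exist a primitive flat section $\eta$ of $R_1\rho_{U,*}\Z$ over $V$ and a rational constant $r$ with $\langle\eta,\nu(t)\rangle = r$ identically on $V$; and since $\eta$ is determined up to sign by the closure $\overline{\Z\nu(t)}$, it extends to a section on at most a double cover of $U$.

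Then I would split into cases based on the monodromy orbit of $\eta$. If the orbit spans $H_1(E_t,\Q)$, pairing $\nu(t)$ against two independent monodromy translates of $\eta$ forces $\nu(t) \in H^1(E_t,\Q)/H^1(E_t,\Z)$, so $\nu(t)$ is torsion for every $t$; since the $N$-torsion multisections of $\overline{\J}$ form a countable union of analytic subsets, the holomorphic section $\nu$ must factor through one of them, giving $N\nu = 0$ and hence $\delta(\nu)$ is $N$-torsion. If the orbit has rank one, then after a further double cover $\eta$ becomes a global flat section of $j_*R_1\rho_{U,*}\Z$ on $\Gamma$, and the constancy of $\langle\eta,\nu\rangle = r$ confines $\nu$ to a real one-dimensional coset subtorus in $\overline{\J}$; holomorphy of $\nu$ with values in a real one-dimensional family forces $\nu$ to be flat in this coset bundle, and combined with the $(1,1)$-condition on $\delta(\nu)$ in the weight-two Hodge structure on $H^1(\Gamma, j_*R^1\rho_{U,*}\Z)$ this yields torsion of $\delta(\nu)$. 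The main obstacle lies in this last step—a delicate Hodge-theoretic argument about the $(1,1)$-part of the local system in the presence of a monodromy-invariant class—though for a generic elliptic $K3$ surface, whose global monodromy admits no nonzero invariants in $H_1(E_t,\Z)$, the rank-one case is vacuous and only the first dichotomy contributes.
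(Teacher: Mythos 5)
Your first half coincides with the paper's argument: apply Lemma \ref{LEM002} fiberwise, use real-analyticity of the coefficients $\alpha_1(t),\alpha_2(t)$ plus a Baire category argument over the countable family of loci $\{a\alpha_1+b\alpha_2=q+n\}$ to promote the pointwise non-density to a single rational linear relation holding identically on a disk. (The paper phrases the coefficients as $x(t),y(t)$ with $\tilde\nu=x[\delta_1]+y[\delta_2]$ and deduces real-analyticity from the holomorphic relation $h=x+yg$, where $g$ is the period ratio.) Where you diverge is the endgame. The paper does \emph{not} split on the monodromy orbit of the integral covector $\eta=(a,b)$: it observes that the single relation $a_1x(t)+a_2y(t)+a_3=0$, combined with holomorphy of $h=x+yg$ and nonconstancy of $g$ (nonconstant $j$-invariant), already forces $x$ and $y$ to be constant --- e.g.\ solve the relation for $\alpha_1$ to get $h=c_1+\alpha_2(c_2+g)$, so $\alpha_2=(h-c_1)/(c_2+g)$ is real and meromorphic, hence constant. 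Thus $\nu\big|_U$ lifts to a flat section of $R^1\rho_{U,\ast}\C$ (equivalently, the Griffiths infinitesimal invariant vanishes), so $\delta(\nu\big|_U)=0$ in $H^1(U,R^1\rho_{U,\ast}\Q)$, and Zucker's injectivity $H^1(\Gamma,j_\ast R^1\rho_{U,\ast}\Q)\hookrightarrow H^1(U,R^1\rho_{U,\ast}\Q)$ gives the contradiction.

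The genuine gap in your proposal is therefore the rank-one case, which you explicitly defer to a ``delicate Hodge-theoretic argument.'' As stated, Proposition \ref{P278} is for an arbitrary elliptic surface $X_\Gamma\to\Gamma$ and an arbitrary normal function, so you cannot dismiss this case by appealing to the largeness of the monodromy of a generic elliptic $K3$; and confining $\nu$ to a real one-dimensional coset subtorus does not by itself force $\nu$ to be flat without invoking exactly the holomorphy-versus-reality argument above. The fix is the one-line computation just described: one rational relation plus holomorphy of the local lift and nonconstancy of the period already kill the infinitesimal invariant, so no case division on the monodromy orbit is needed. Two smaller points: your claim that $\eta$ is determined up to sign by $\overline{\Z\nu(t)}$ fails when $\nu(t)$ is torsion (the closure is then finite and every primitive covector works), though in that sub-case you conclude torsion of $\delta(\nu)$ directly; and in the spanning case your conclusion $N\nu=0\Rightarrow N\delta(\nu)=0$ is fine since $\delta$ is a homomorphism, so you do not even need the injectivity statement there. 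Note finally that the paper itself leaves the constant-$j$-invariant case (where $g$ is constant) to the reader, so neither argument is fully self-contained in complete generality.
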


\begin{proof} A local lifting of the normal function $\nu\big|_U\in H^0(U,\J)$  determines
an analytic function on a disk $\Delta\subset U$, viz., $\tilde{\nu} \in H^0(\Delta,{\F}^{1,\ast})
\simeq H^0(\Delta,\CO_{\Delta})$, using the fact that $\F^{1,\ast}$ is a holomorphic
line bundle. Further, we have the family of lattices $H^0(\Delta,R^1\rho_{U,\ast}\Z) 
\hookrightarrow H^0(\Delta,{\F}^{1,\ast})$. Let $\delta_1,\ \delta_2 \in H^0(\Delta,
R^1\rho_{U,\ast}\Z)$ be generators with respective images 
$[\delta_1],\ [\delta_2]\in H^0(\Delta,\F^{1,\ast})$, under the (injective) composite
\[
H^0(\Delta,R^1\rho_{U,\ast}\Z)  \to H^0(\Delta,\F) \to H^0(\Delta,\F^{1,\ast}).
\] 
Thus we can write
\[
\tilde{\nu}(t) = x(t)[\delta_{1,t}] + y(t)[\delta_{2,t}],
\]
for unique real-valued  functions $x(t),\ y(t)$, $t \in \Delta$.
Note that $[\delta_{2,t}] = g(t)[\delta_{1,t}]$ for some
holomorphic function $g(t)$, and likewise $\tilde{\nu}(t) = h(t)[\delta_{1,t}]$
for a holomorphic $h(t)$.  Thus $h(t)  = x(t) \ +\ y(t)g(t)$, and in particular
${\rm Re}(h(t)) = x(t) + y(t){\rm Re}(g(t))$, ${\rm Im}(h(t)) = y(t){\rm Im}(g(t))$.
Thus $x(t)$and $y(t)$ are real analytic functions. If the 
cyclic group generated by $\nu(t)$ is not dense in $J^1(F_t)$
for uncountably many $t\in \Delta$, then by a countability and Baire type argument together
with {Kronecker's theorem},
$\{1,x(t),y(t)\}$ lie on a hyperplane
$a_1x + a_2y + a_3 = 0$ in $\R^2$, where $\{a_j\} \in \Q$ are constant with respect
to $t\in \Delta$ and not all zero.  Using  $h(t)  = x(t) \ +\ y(t)g(t)$,
one can easily check then that 
$a_1x(t) + a_2y(t) + a_3 = 0$ for all $t\in \Delta$ implies that
$\tilde{\nu}$ is constant.  More precisely, one can choose the lift 
$\tilde{\tilde{\nu}}\in H^0(\Delta,R^1\rho_{U,\ast}\C)$ of $\tilde{\nu}$
via the composite
\[
H^0(\Delta,R^1\rho_{\Delta,\ast}\C) \to H^0(\Delta,\F) \to H^0(\Delta,\F/\F^1) =
H^0(\Delta,\F^{1,\ast}),\ \tilde{\tilde{\nu}}\mapsto \tilde{\nu}.
\]
This tells us that the
Griffiths' infinitesimal invariant of $\nu$ over $U$ (see \cite{G}, p. 69) is zero.
However in this case the Griffiths' infinitesimal invariant is known to coincide
with the topological  de Rham invariant (see \cite{MS},
as well as \cite{L-S} for a background on this). 
In the end, this translates to saying  that $\delta\big(\nu\big|_U\big) = 0$ as a class in
$H^1(U,R^1\rho_{U,\ast}\Q)$. Alternatively and more directly, if we assume for the moment that
the $j$-invariant of the family $X_U\to U$ is nonconstant, then 
$\F^1\cap R^1\rho_{U,\ast}\C = 0 \in \F$ and hence $\nu\big|_U$
is induced by a class in $H^0(U,R^1\rho_{U,\ast}\C/R^1\rho_{U,\ast}\Z)$,
and therefore $\delta\big(\nu\big|_U\big) = 0\in H^1(U,R^1\rho_{U,\ast}\Q)$. The
same conclusion holds, albeit by a tedious argument,  
if the $j$-invariant is constant - the details are left to the
reader and involve a generalization of Example \ref{EX01} below. Next by (\cite{Z2}, \S14), the map
$H^1(\Gamma,R^1\rho_{\Gamma,\ast}\Q)\hookrightarrow H^1(U,R^1\rho_{U,\ast}\Q)$
is injective. Hence $\delta(\nu)  = 0 \in H^1(\Gamma,R^1\rho_{\Gamma,\ast}\Q)$,
a contradiction.
\end{proof}

\begin{ex}\label{EX01} {\rm Let $E$ be an elliptic curve and $Y = E\times E$. 
We can illustrate Proposition \ref{P278} rather easily in this situation. With
regard to the first projection $Y\to E$, the sheaf of germs of normal functions $\J$ is
given by the short exact sequences of sheaves over $E$:
\[
0\to H^1(E,\Z) \to \CO_E\big(H^{0,1}(E)\big) \to \J \to 0.
\]
Note that $H^1\big(E,\CO_E\big(H^{0,1}(E)\big)\big) \simeq H^{0,1}(E)\otimes H^{0,1}(E)$,
and hence there is the short exact sequence:
\[
0 \to J^1(E) \to H^0(E,\J) \xrightarrow{\delta} \big\{H^1(E,\Z)\otimes H^1(E,\Z)\big\}
\bigcap H^{1,1}(Y)\to 0.
\]
If $\nu \in H^0(E,\J)$ has trivial infinitesimal invariant, then
\[
\nu \in H^0\big(E,H^{0,1}(E)/H^1(E,\Z)\big) \simeq J^1(E),
\]
and hence $\delta(\nu) = 0$. For $n\in \BN$, let
$f_n : E\to E$ be given by multiplication by $n$, and 
let $\Xi(n)$ be the  graph
of $f_n$ in $Y$,  with K\"unneth
component $[\Xi(n)^{1,1}] \in H^1(E,\Z)\otimes H^1(E,\Z)$.
It follows rather directly from {Kronecker's theorem} that
\[
\bigcup_{n\in \BN}\Xi(n) \subset Y,
\]
is dense in $Y$ in the strong topology.
 Note however that if $\nu$ is
the normal function associated to $f_1$, then $n\nu$ is
the normal function associated to $f_n$. Furthermore
$\delta(n\nu) = [\Xi(n)^{1,1}] \ne  0$, and hence
the density also follows from Proposition \ref{P278}.
 Now let $X := Y/\pm$ be the
corresponding Kummer counterpart with $C_n$  being the image
of $\Xi(n)$ in $X$. Then $C_n$ is a rational curve and
\[
\bigcup_{n\in \BN}C_n\subset X,
\]
is likewise dense in $X$ in the strong topology.
}
\end{ex}

This gives us the density of
rational curves in the strong topology
on an elliptic $K3$ surface $X$ as long as $\pi: X\to \PP^1$ admits a rational nt multisection, i.e., \thmref{THM004}.
It also gives another proof of \propref{PROP001} as long as we can find a rational nt multisection.

On the other hand, we can produce such a
multisection using our deformational argument as follows. Although this is redundant, we keep it here to make
our paper self-contained.

Now let us consider a general elliptic $K3$ surface $(X, L)\in \CP_{g,m}$ with
$\rho: X \to \PP^1$ the elliptic fibration given by $|F|$.

Let $\Gamma_0 \in |L|$ be a rational curve, with desingularization $\Gamma = \PP^1$.
Note that $\rho\big|_{\Gamma_0}:\Gamma_0 \to \PP^1$ has degree $m$, and
hence the corresponding map $\lambda : \Gamma \to \PP^1$ is of degree $m$.
Base change gives us an elliptic surface $\rho_{\Gamma} : X_{\Gamma} \to \Gamma$,
with section $\sigma : \Gamma \hookrightarrow X_{\Gamma}$, (where we can assume after
a proper modification, that $X_\Gamma$ is smooth).
Let $h : X_{\Gamma} \to X$
be the obvious morphism (of degree $m$). Note that 
\[
h_{\ast}(\sigma(\Gamma)) = \Gamma_0;
\]
moreover we have corresponding classes $F$, $h^{\ast}(L)$ on $X_{\Gamma}$,
with $h_{\ast}(F) = F$, and on $X_{\Gamma}$:
\[
F^2 = 0,\ (h^{\ast}(L))^2 = m\cdot (2g-2), \ (\sigma(\Gamma))^2 = b,\ ({\rm some}\ b \in \Z),
\]
%$(h^{\ast}(L)^2) = |h_{\ast}(h^{\ast}(L)^2) = m(L^2)$.
\[
F\cdot \sigma(\Gamma) = 1, \  F\cdot h^{\ast}(L) = m,\ \sigma(\Gamma)\cdot h^{\ast}(L) = 2g-2.
\]
%$\sigma(\Gamma)\cdot h^{\ast}(L) = h_{\ast}\big(\sigma(\Gamma)\cdot h^{\ast}(L)\big)
% = (\Gamma_0,L) = L^2 = 2g-2$.
Note that  $\{F,h^{\ast}(L),\sigma(\Gamma)\}$ are independent over $\Q$
iff $m\cdot b \ne 2g-2$. The independence follows from
\begin{lem}
$b < 0$.
\end{lem}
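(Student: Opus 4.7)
The plan is to compute $b = \sigma(\Gamma)^2$ via adjunction on $X_\Gamma$ combined with Kodaira's canonical bundle formula for elliptic fibrations with a section, and then show that the Euler characteristic of $\mathcal{O}_{X_\Gamma}$ is strictly positive.

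First, since $\sigma(\Gamma)\cong\Gamma\cong\BP^1$ sits as a smooth rational curve on the smooth surface $X_\Gamma$, adjunction yields
\[
-2 \;=\; \sigma(\Gamma)^2 \;+\; K_{X_\Gamma}\cdot\sigma(\Gamma).
\]
Next, because $\rho_\Gamma:X_\Gamma\to\Gamma$ is a relatively minimal elliptic fibration admitting the section $\sigma$ (in particular there are no multiple fibers), Kodaira's canonical bundle formula gives
\[
K_{X_\Gamma}\;=\;\rho_\Gamma^{\ast}\bigl(K_\Gamma + \Lambda\bigr),
\]
for a line bundle $\Lambda$ on $\Gamma$ of degree $\chi(\mathcal{O}_{X_\Gamma})$. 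Since $\sigma(\Gamma)$ is a section, $\rho_\Gamma\circ\sigma=\mathrm{id}_\Gamma$, so
\[
K_{X_\Gamma}\cdot\sigma(\Gamma)\;=\;\deg\bigl(K_\Gamma+\Lambda\bigr)\;=\;-2+\chi(\mathcal{O}_{X_\Gamma}).
\]
Combining the two displays gives the well-known section formula
\[
b \;=\; \sigma(\Gamma)^2 \;=\; -\chi(\mathcal{O}_{X_\Gamma}).
\]

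It therefore suffices to show $\chi(\mathcal{O}_{X_\Gamma})>0$. The original elliptic $K3$ surface has $\chi(\mathcal{O}_X)=2$, so by Kodaira's formula $\chi=\tfrac{1}{12}\deg\Delta$, the discriminant divisor of $\rho:X\to\BP^1$ has degree $24$, and in particular $\rho$ has singular fibers. Pulling back via $\lambda:\Gamma\to\BP^1$ of degree $m$, the discriminant of the base-changed fibration has degree $24m$; passing to the relatively minimal smooth model of the base change only resolves the singularities coming from where $\lambda$ meets the discriminant of $\rho$ and does not decrease $\chi(\mathcal{O})$ below the generic value, so $\chi(\mathcal{O}_{X_\Gamma})=2m>0$. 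Alternatively, one observes that if $\chi(\mathcal{O}_{X_\Gamma})$ vanished, then $X_\Gamma$ would be (up to \'etale cover) a product $\Gamma\times E$ with smooth fibration, contradicting the existence of singular fibers inherited from $\rho$. Hence $b=-\chi(\mathcal{O}_{X_\Gamma})<0$.

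The main point that requires care is the base-change computation of $\chi(\mathcal{O}_{X_\Gamma})$: one must check that the ``proper modification'' producing a smooth $X_\Gamma$ is in fact relatively minimal (so that Kodaira's formula applies directly), and that this modification only introduces $(-2)$-curves over the points where $\lambda$ ramifies over the discriminant of $\rho$, so that $\chi$ remains the expected $2m$. Once this is in hand, everything else is a one-line consequence of adjunction.
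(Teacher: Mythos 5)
Your proof is correct, but it takes a genuinely different route from the paper's. The paper also starts from adjunction, $-2 = b + K_{X_\Gamma}\cdot\sigma(\Gamma)$, but then computes $K_{X_\Gamma}$ via the ramification formula for the degree-$m$ cover $h:X_\Gamma\to X$: since $h$ ramifies only along fibers of $\rho_\Gamma$, one has $K_{X_\Gamma}=h^{\ast}(K_X)+k\cdot F$ with $k\ge 0$, and $K_X=0$ because $X$ is a $K3$ surface, so $K_{X_\Gamma}\cdot\sigma(\Gamma)=k\ge 0$ and $b=-(2+k)\le -2$ immediately. This exploits the triviality of $K_X$ and needs no information about the singular fibers or about relative minimality of $X_\Gamma$. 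You instead invoke Kodaira's canonical bundle formula for $\rho_\Gamma$ to get the standard identity $b=-\chi(\CO_{X_\Gamma})$ and then must show $\chi(\CO_{X_\Gamma})>0$; this buys a sharper, more structural answer ($b=-2m$ generically, matching the paper's $k=2m-2$), but at the cost of the two verifications you yourself flag: that the smooth model can be taken relatively minimal (or that non-minimality only decreases $\sigma(\Gamma)^2$, which is true since blowing up points on the section lowers its self-intersection), and that $\chi(\CO_{X_\Gamma})$ equals the expected $2m$ after base change --- or at least is positive, which follows from the presence of singular fibers inherited from the $24$ nodal fibers of $X\to\PP^1$. Both arguments are sound; the paper's is shorter and self-contained given that $X$ is $K3$, while yours is the more general statement about sections of elliptic surfaces over $\PP^1$.
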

\begin{proof} Since $\sigma(\Gamma) = \PP^1$, the adjunction formula
tells us that
\[
-2 = b + K_{X_{\Gamma}}\cdot \sigma(\Gamma).
\]
But $h$ is ramified only along the fibers of $\rho_{\Gamma}$, i.e. over which $\lambda$
ramifies, and hence
\[
K_{X_{\Gamma}} = h^{\ast}(K_X) + k\cdot F,
\]
for some integer $k\geq 0$. But $X$ a $K3$ surface implies that $K_X = 0$,
and hence $b = -(2+k) < 0$.
\end{proof}

Now let us suppose that:
\begin{equation}\label{E301}
\ol{\rho}_{\Gamma,\ast}\ {\rm in\ diagram}\ (\ref{E27})\ {\rm is\ an\ isomorphism.}
\end{equation}
Then using the fact that $\rho_{\Gamma,\ast}(
h^{\ast}(L)) = m\Gamma$,  by (\ref{E27}) it follows that 
\[
[h^{\ast}(L) - m\sigma(\Gamma)] \ne 0\in H^1(\Gamma,j_{\ast}R^1\rho_{\Gamma,\ast}\Q)^{1,1}.
\]

The general story (viz., when (\ref{E301}) is not satisfied)
 involves a rational linear combination
of the components of the bad fibers of $\rho_{\Gamma}$ together with
$[h^{\ast}(L) - m\sigma(\Gamma)]$ (compare for example (\cite{C-Z}, thm 1.6)). 
The argument in showing  that this gives a nontrivial class in 
$H^1(\Gamma,j_{\ast}R^1\rho_{\Gamma,\ast}\Q)^{1,1}$
is similar but more complicated.
For our purpose, we can always choose $\Gamma_0$ such that it meets the singular fibers of $X/\PP^1$ transversely
and hence $X_{\Gamma}$ is smooth and has irreducible fibers over $\Gamma$ and \eqref{E301} is trivially
satisfied.
Then $[h^{\ast}(L) - m\sigma(\Gamma)]$ determines a normal function $\nu$
with
\[
\delta(\nu) = [h^{\ast}(L) - m\sigma(\Gamma)] \in H^1(\Gamma,j_{\ast}R^1\rho_{\Gamma,\ast}\Z)^{1,1}
\]
and hence by Proposition \ref{P278}, $\nu(t)$ has nontrivial dynamics for general $t\in \Gamma$.

It remains to verify the following.

\begin{lem}\label{LEM005}
For all $g,m\in \BZ^+$ satisfying $g\ge 2$ and $2|m$ and
a general $(X,L)\in \CP_{g,m}$, there is an irreducible nodal rational curve in $|L|$ that meets all singular
curves in $|F|$ transversely.
\end{lem}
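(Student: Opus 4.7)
The plan is to refine the existence result of \lemref{LEM004} so that the rational curve in $|L|$ additionally meets every singular fiber of $\pi$ transversely. First, \lemref{LEM004} with $(a,b)=(1,0)$ for $g\ge 3$, and for $g=2$ an entirely parallel construction using the curves $\Gamma_i$ of \ssecref{SSECG2}, furnishes an irreducible nodal rational curve $\Gamma_0\in |L|$ on every sufficiently general $(X,L)\in \CP_{g,m}$. Let $\mathcal{V}\to \CP_{g,m}$ denote the relative Severi variety of such pairs $(X,\Gamma_0)$; by the above, $\mathcal{V}$ dominates $\CP_{g,m}$.

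For a general $(X,L)\in \CP_{g,m}$ the elliptic fibration $\pi:X\to \BP^1$ has exactly $24$ nodal singular fibers (since $\chi(X)=24$ and for Picard rank $2$ all singular fibers are of type $I_1$). The condition that $\Gamma_0$ meet each of them transversely at its $m$ points of intersection is Zariski-open in $\mathcal{V}$, so the lemma reduces to producing a single $(X,\Gamma_0)\in \mathcal{V}$ realizing it. My approach is to reuse the degeneration $\CS/\Delta$ from \ssecref{SSECDEFORM}: the limiting curve $C_1\cup C_2=\Gamma_1\cup \Gamma_2\in |L|$ on $\CS_0=R_1\cup R_2$ has several free parameters in its definition (the points $s,q_i,r_j$ on $D$), and a generic choice forces each $\Gamma_i$ to be smooth along $D$ with tangent direction at each of the $16$ rational double points $p_j$ of $\CS$ distinct from the tangent directions of the $|F|$-fibers through $p_j$. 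Smoothing to $\CS_t$ then yields, by semicontinuity of the transversality condition under the flat deformation argument of \lemref{LEM004}, a rational curve $\Gamma_0\in |L|$ meeting every singular fiber of $\pi_t$ transversely for small $t\ne 0$.

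The main obstacle is bookkeeping how all $24$ singular fibers of $\pi_t$ specialize on $\CS_0$: $16$ arise from smoothing the nodes $p_j$ of $\CS$, while the remaining $8$ come from internal collisions within the pencil of $m$-cycles $N_1\cup\cdots\cup N_m\in |F|$ on the central fiber, and one must verify that $C_1\cup C_2$ can be arranged to avoid tangency with all of them simultaneously. This detailed analysis can be bypassed by a first-order deformation argument directly on $\mathcal{V}$: at any hypothetical non-transverse $(X,L,\Gamma_0)\in \mathcal{V}$, an infinitesimal variation of $(X,L)$ within $\CP_{g,m}$ (preserving rationality of $\Gamma_0$) perturbs $\Gamma_0$ and each singular fiber in independent directions, and the induced linearization of the tangency equation is generically nonzero. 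Hence the non-transverse locus has codimension $\ge 1$ in $\mathcal{V}$, and since $\mathcal{V}$ dominates $\CP_{g,m}$, the complement gives a nonempty open set of $(X,L)$ for which the desired $\Gamma_0$ exists.
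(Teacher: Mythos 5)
There is a genuine gap at the heart of your argument: you never actually exhibit a single configuration in which the rational curve meets all $24$ singular fibers transversely. Your reduction to a Zariski-open condition on the relative Severi variety $\mathcal{V}$ is fine, but an open condition is only useful once you show it is nonempty on a component of $\mathcal{V}$ dominating $\CP_{g,m}$. You start down the right road --- degenerate to $R_1\cup R_2$ and track where the $24$ nodal members of $|F|$ go (your count of $16$ through the double points $p_j$ plus $8$ from collisions inside the $m$-cycles matches the paper: the latter are the members through the four points with $G_1\sim_{\rat}2p$ and the four with $G_2\sim_{\rat}2p$) --- but you then abandon this analysis in favor of a ``first-order deformation argument'' that is circular: the assertion that varying $(X,L)$ perturbs $\Gamma_0$ and the singular fibers ``in independent directions'' with ``generically nonzero'' linearization of the tangency equation is exactly the claim to be proved, and both objects are coupled through the same K3, so independence cannot be asserted for free. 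Without a witness point (or a computed nonzero derivative at some point), you cannot conclude the non-transverse locus is proper.

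The difficulty is compounded by your choice of limiting curve. The chain-type curve of \lemref{LEM004} with $(a,b)=(1,0)$ passes through the rational double points $p_1,p_2$, which are precisely anchor points of some of the $24$ limiting singular fibers, so transversality there is genuinely delicate --- this is the ``bookkeeping'' you flag and then skip. The paper avoids the issue entirely by using a \emph{different} limiting rational curve, namely $C_1\cup C_2$ with $C_i\in|L_i|$ and $C_1\cdot D=C_2\cdot D=(g+1)q$ for a single general point $q\in D$ (the maximally tangent curve of \cite{C}). Since this curve meets $D$ only at the general point $q$, it stays away from all $24$ special points on $D$, each component $N_k$ of a special fiber meets each $C_i$ in exactly one point off $D$, and transversality is immediate. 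Adopting that limiting curve (which also handles $g=2$ uniformly, where your appeal to \lemref{LEM004} with $b=0$ is excluded by the hypothesis $b^2+(g-2)^2\ne 0$) would close the gap.
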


\begin{proof}
It is well known that $X/\PP^1$ has $24$ nodal fibers. It suffices to figure out where these $24$ curves in
$|F|$ go when we degenerate $X$.
Let $\CS/\Delta$ be the family of $K3$ surfaces constructed in \ssecref{SSECDEFORM}. A curve $N\in |F|$ on
the central fiber $\CS_0 = R = R_1 \cup R_2$ is described in \remref{REM003}. It is not hard to see that $N$
is a limit of nodal rational curves in $|F|$ on the general fibers if one of the following holds:
\begin{itemize}
\item $N$ passes through one of the sixteen rational double points $p_j$
and there are sixteen such curves;
\item $N$ passes through one of the four points $\{ p\in D: G_1 \sim_{\rat} 2p \text{ on } D\}$
and there are four such curves;
\item $N$ passes through one of the four points $\{ p\in D: G_2 \sim_{\rat} 2p \text{ on } D\}$
and there are four such curves.
\end{itemize}
One can check that these add up to $24$.

Now we let $C = C_1 \cup C_2$ with irreducible components $C_i \in |L_i|$ satisfying
\begin{equation}\label{E302}
C_1\cdot D = C_2 \cdot D = (g+1) q
\end{equation}
for some point $q\in D$. This is a limiting rational curve and it obviously meets each of the $24$ curves
$N\in |F|$ given above transversely.
\end{proof}

\section{An open question}

Suppose that $X$ is a $K3$ surface defined over $\ol{\Q}$. Let $\Sigma\subset X$
be the union of all rational curves on $X$. Rigidity arguments  imply that every rational
curve in $X$ is defined over $\ol{\Q}$.
The following was raised by Matt Kerr (\cite{Ke}):

\begin{q}{\rm  Is $X(\ol{\Q})\subset \Sigma(\ol{\Q})$?}
\end{q}

An affirmative answer to this question would not only imply that
$\Sigma$ is dense in $X(\C)$ in the usual topology,
but this would also provide a nontrivial instance of the
Bloch-Beilinson conjecture on the injectivity of Abel-Jacobi maps for 
smooth projective varieties
defined over $\ol{\Q}$. More specifically, by an application of the
connectedness part of Bertini's theorem, $\Sigma$ is connected, 
hence  $\CH_{\hom}^2(X/\ol{\Q}) = 0$.

It was pointed out to us by the referee that this question actually has a long, albeit poorly
documented  history. It was posed by F. Bogomolov as far back as 1981 \cite{BT2}.


\begin{thebibliography}{99}
\bibitem[BPV]{BPV} W. Barth, C. Peters and A. van de Ven,
{\it Compact Complex Surfaces\/}, Springer-Verlag, Berlin, 1984.

\bibitem[BHT]{BHT} F. Bogomolov, B. Hassett and Y. Tschinkel,
Constructing rational curves on $K3$ surfaces, to appear in {\it Duke Math. J.\/}, preprint arXiv:0907.3527.

\bibitem[B-L]{B-L} J. Bryan and N. C. Leung,
The Enumerative geometry of $K3$ surfaces and modular Forms,
{\it J. Amer. Math. Soc.\/} {\bf 13} (2000), no. 2, 371-410.
Also preprint alg-geom/9711031.

\bibitem[Bu-L]{Bu-L} G. Buzzard and S. Y. Lu,
Algebraic surfaces holomorphically dominable by $\BC^2$,                                            
{\it Invent. Math.\/} {\bf 139} (2000), no. 3, 617-659.

\bibitem[BT1]{BT1} F. A. Bogomolov and Y. Tschinkel,
Density of rational points on elliptic $K3$ surfaces,
{\it Asian J. Math.\/} {\bf 4} (2000), no. 2, 351-368.

\bibitem[BT2]{BT2}
F. A. Bogomolov and Y. Tschinkel, Rational curves and points on $K3$
surfaces, {\it Amer. J. Math.\/} {\bf 127} (2005), no. 4, 825-835.

\bibitem[C]{C} X. Chen, Rational curves on $K3$ surfaces,
{\it J. Alg. Geom.\/} {\bf 8} (1999), 245-278. Also preprint math.AG/9804075.

\bibitem[C-L]{C-L}
X. Chen and J. D. Lewis,
The Hodge-${\mathcal D}$-conjecture for $K3$ and Abelian surfaces,
{\it J. Alg. Geom.} {\bf 14} (2005), 213-240.

\bibitem[CLM]{CLM} C. Ciliberto, A. Lopez and R. Miranda, Projective
degenerations of $K3$ surfaces, Gaussian maps, and Fano threefolds,
{\it Invent. Math.\/} {\bf{114}}, 641-667 (1993).

\bibitem[C-Z]{C-Z} D. Cox and S. Zucker, Intersection numbers of 
sections of elliptic surfaces, {\em Inventiones math.} {\bf 53}, (1979), 1-44.

\bibitem[D]{D} T. Dedieu, Severi varieities and self rational maps of $K3$ surfaces,
{\it Internat. J. Math.\/} {\bf 20} (2009), no. 12, 1455-1477. Also preprint arXiv:0704.3163.

\bibitem[H-T]{H-T} J. Harris and Y. Tschinkel, Rational points on quartics,
{\it Duke Math. J.\/} {\bf 104} (2000), no. 3, 477-500.

\bibitem[G]{G} M. Green, {\em Infinitesimal methods in Hodge theory}, in
Algebraic Cycles and Hodge Theory (Torino, 1993), 1-92, Lecture Notes in Math., 
{\bf 1594}, Springer, Berlin, 1994.

\bibitem[Gr]{Gr} P. A. Griffiths, On the periods of certain rational integrals: I
{\em  Annals of Mathematics,} Second Series, Vol. {\bf 90}, No. 3 (Nov., 1969), 460-495.

\bibitem[G-H]{G-H} P. A. Griffiths and J. Harris, 
On the Noether-Lefschetz theorem and some remarks on codimension two cycles,
{\it Math. Ann.\/} {\bf 271} (1985), no. 1, 31-51.

\bibitem[H-W]{H-W} G. H. Hardy and E. M. Wright, {\em An introduction to the theory of numbers},
Fifth edition. The Clarendon Press, Oxford University Press, New York, 1979. xvi+426 pp. ISBN: 0-19-853170-2; 0-19-853171-0.

\bibitem[H]{H}
B. Hassett,
Potential density of rational points on algebraic varieties,
{\em Higher dimensional varieties and rational points (Budapest, 2001)},
223-282, Bolyai Soc. Math. Stud., 12, Springer, Berlin, 2003.

\bibitem[Ke]{Ke} M. Kerr. Personal communication.

\bibitem[K]{K} S. Kobayashi,
{\it Hyperbolic Manifolds and Holomorphic mappings\/}, Marcel Dekker,
New York, 1978.

\bibitem[La]{La} S. Lang, Hyperbolic and Diophantine analysis,
{\it Bull. Amer. Math. Soc. (N.S.)} {\bf 11} (1986),
no. 2, 159-205.

\bibitem[L-S]{L-S} J. D. Lewis and S. Saito,
Algebraic cycles and Mumford-Griffiths invariants, 
{\em Amer. J. Math.} {\bf 129}, (2007), no. 6, 1449-1499.

\bibitem[L-L]{L-L} J. Li and C. Liedtke, Rational curves on $K3$ surfaces,
preprint arXiv:1012.3777.

\bibitem[M-P]{M-P} D. Maulik and R. Pandharipande, Gromov-Witten theory and Noether-Lefschetz theory,
preprint arXiv:0705.1653.

\bibitem[M-M]{M-M} S. Mori and S. Mukai, {\it The Unirulesness of the
Moduli Space of Curves of Genus 11\/}, Lecture Notes in Mathematics,
vol. 1016 (1982), 334-353.

\bibitem[MS]{MS} M. Saito,
{\em Direct image of logarithmic complexes and infinitesimal invariants of cycles,}
in Algebraic Cycles and Motives. Vol. 2, 304-318, London Math. Soc. Lecture Note Ser., {\bf 344}, Cambridge Univ. Press, Cambridge, 2007.

\bibitem[T]{T} T. Shioda, {\it On elliptic modular surfaces,}. J. Math. Soc. Japan {\bf 24}(1), (1972), 20-59.

\bibitem[Z1]{Z1} S. Zucker, Generalized intermediate jacobians and the
theorem on normal functions, {\em Inventiones Math.} {\bf 33}, (1976), 185-222.

\bibitem[Z2]{Z2} S. Zucker, Hodge theory with degenerating coefficients: $L_2$
cohomology in the Poincar\'e metric,  {\em Annals of Math.} {\bf 109}, (1979),
415-476.
\end{thebibliography}
\end{document}